\numberwithin{equation}{section}
\newcommand{\C}{\mathbb{C}}     
\newcommand{\R}{\mathbb{R}}     
\DeclareMathOperator{\Ho}{H}
\DeclareMathOperator{\baar}{B}
\newtheorem{theorem}{Theorem}[section]            
\newtheorem{lemma}[theorem]{Lemma}               
\newtheorem{corollary}[theorem]{Corollary}              
\theoremstyle{remark}              
\theoremstyle{remark}           
\begin{document}

\title{Legendrian surgery}
\author{Tobias Ekholm}
\address{Department of mathematics and Center for Geometry and Physics, Uppsala University, Box 480, 751 06 Uppsala, Sweden and
Institut Mittag-Leffler, Aurav 17, 182 60 Djursholm, Sweden}
\email{tobias.ekholm@math.uu.se}

\thanks{TE is supported by the Knut and Alice Wallenberg Foundation, KAW2020.0307 Wallenberg Scholar and by the Swedish Research Council, VR 2022-06593, Centre of Excellence in Geometry and Physics at Uppsala University and VR 2020-04535, project grant.}

\begin{abstract}	
This is an overview paper that describes Eliashberg's Legendrian surgery approach to wrapped Floer cohomology and use it to derive the basic relations between various holomorphic curve theories with additional algebraic constructions. We also give a brief discussion of further results that use the surgery perspective, e.g., for holomorphic curve invariants of singular Legendrians and Lagrangians.     
\end{abstract}

\maketitle

\tableofcontents

\section{Introduction}\label{Sec Intro}
Starting some time around 2004, Yasha Eliashberg initiated the handle approach to computing holomorphic curve invariants of Weinstein manifolds and their contact boundaries, focusing in particular on relating the most elementary part of closed string symplectic field theory (SFT) \cite{EGH}, orbit contact homology, to the simplest part of open string SFT, the Legendrian- or Chekanov-Eliashberg dg-algebra, of the Legendrian attaching spheres for critical handles. Although perhaps not the original intention, this lead to connections and isomorphisms between all the various flavours of `tree level' holomorphic curve theories for Weinstein manifolds. Corresponding results for `higher loops' constitute very interesting problems where some initial progress has been made but much remains to be understood. 

In this paper we will explain the steps in the proof of the most important and basic Legendrian surgery isomorphism that expresses the wrapped Floer cohomology of the co-core disks of a Weinstein manifold in terms of the Chekanov-Eliashberg dg-algebra of attaching spheres. We will then discuss extensions and ramifications of this result to related theories: symplectic cohomology with its product structure, Hochschild homology and cohomology of wrapped Floer cohomology, partially wrapped Fukaya categories and Chekanov-Eliashberg dg-algebras with coefficients in chains on the based loop space coefficients, associated dg-algebras for singular Legendrians, and cut-and-paste methods.

\section{Weinstein manifolds}
Consider an exact symplectic $2n$-manifold $X$ with symplectic form $\omega=d\lambda$. A vector field $Z$ which is $\omega$-dual to $\lambda$, $\lambda=\iota_{Z}\omega$ is called a \emph{Liouville vector field}. For such a vector field, if $L$ denotes the Lie derivative, $L_{Z}\omega=d(\iota_{Z}\omega) + \iota_{Z}d\omega=\omega$,  which means that $\omega$ expands along the flow of $Z$ in the positive direction and contracts in the negative. 

The manifold $X$ is a \emph{Weinstein manifold} if $Z$ is complete and if it admits a Morse function $F\colon X\to\R$ for which $Z$ is gradient like. It follows in particular that outside a compact subset, $X$ is symplectomorphic to $Y\times [0,\infty)$ where $Y=F^{-1}(T)$ for some sufficiently large $T$ and where the symplectic form on $Y\times[0,\infty)$ is $d (e^{t}\alpha)$, $\alpha=\lambda|_{Y}$. Here $(Y,\alpha)$ is a contact manifold which is the \emph{ideal contact boundary} of $X$. Compact versions, $\overline X=X\setminus F^{-1}(T,\infty)$ of $X$ are called \emph{Weinstein domains}, one can move between Weinstein domains and Weinstein manifolds by adding or removing cylindrical ends.   

The most basic example of a Weinstein manifold is $\C^{n}\approx\R^{2n}$, the associated Weinstein domain is the $2n$-ball $B^{2n}$, with the standard symplectic form $\sum_{j=1}^{n} dx_{j}\wedge dy_{j}= d (xdy)$, Liouville vector field $\sum_{j} \frac12\left(x_{j}\partial_{x_{j}} + y_{j}\partial_{y_{j}}\right)$, and ideal contact boundary the standard contact sphere $S^{2n-1}$ with contact structure given by complex tangencies. 

Other examples are cotangent bundles $T^{\ast} M$ of compact manifolds $M$ with Liouville form $pdq$. Equip $M$ with a Riemannian metric take the Liouville vector field as $\sum_{j} \frac12 p_{j}\partial_{p_{j}}+X_{H}$, where $H=pdq(\nabla f)$ for some small Morse function $f\colon M\to\R$, $X_{H}$ is the Hamiltonian vector field, and the exhausting Morse function is $F(q,p)=\frac12 p^{2}+f(q)$.

Consider now a Weinstein manifold $X$. Using the equation $L_{Z}\omega=\omega$, it is straightforward to check that the stable manifolds $W^{\mathrm{s}}$ of the critical points are isotropic, i.e., $\omega$ vanishes along $W^{\mathrm{s}}$. In particular, critical points have indices $\le n$, and ordering the critical levels according to index then gives the following handle decomposition of $X$: 
\begin{align*}
&\overline{X} \ = \ \overline{X}^n \ \supset \ \overline{X}^{n-1} \ \supset \ \dots \ \supset \ \overline{X}^{1} \ \supset  \ \overline{X}^0, \text{ where }\\ 
&\overline{X}^{m} \ = \ \overline{X}^{m-1} \ \cup_{\Lambda^{m}} \ H^{m}, \quad m=1,\dots,n.
\end{align*}
Here $\overline{X}^{0}$ is the $2n$-ball and $\overline{X}^{m}$ is obtained from $\overline{X}^{m-1}$ by attaching $m$-handles $H^{m}\approx \bigsqcup \overline{T^{\ast} D^{m}}\times B^{2(n-m)}$ along a collection of isotropic $(m-1)$-spheres $\Lambda^{m}$ (the descending spheres of the isotropic stable manifolds) in the contact boundary $Y^{m-1}$ of $X^{m-1}$. 

For $m<n$ there is an $h$-principle for isotropic $m$-spheres in contact $(2n-1)$-manifolds, which means that the symplectic topology of $X^{m}$ is uniquely determined by the homotopy class of the tangent map of $\Lambda_{m}$. Consequently all the interesting symplectic topology of $X=X^{n}$ is carried by the Legendrian isotopy class of the attaching spheres $\Lambda=\Lambda^{n}\subset Y^{n-1}=\partial \overline{X}^{n-1}$. This observation can be taken as the starting point for the Legendrian surgery approach to symplectic topology invariants. It says that all symplectic invariants of $X$ can be obtained from $\Lambda$. 

We will call the stable manifolds $L$ of the index $n$ critical points the \emph{core disks} and the unstable manifolds $C$ its \emph{co-core disks}. The first Legendrian surgery result we will explain can then be stated as saying that the wrapped Floer cohomology $CW^{\ast}(C)$ of $C$ is quasi-isomorphic to the Chekaonv-Eliashberg dg-algebra $CE^{\ast}(\Lambda)$ of $\Lambda$. 

Below we will simplify notation and write $\overline{X}=\overline{X}_{0}\cup_{\Lambda} H$, where $\overline{X}_{0}$ is the \emph{subcritical} part of $X$, i.e., the sublevel set of all handles of index $<n$ and $H$ is the union of \emph{critical} $n$-handles, one copy of $T^{\ast}D^{n}$ for each handle, attached along $\Lambda\subset Y_{0}=\partial\overline{X}_{0}$. We will also simplify notation and write e.g., $X=X_{0}\cup_{\Lambda} H$, etc, and not distinguish in notation between a Weinstein domain and the Weinstein manifold which is its completion. We use the notation $Y=\partial_{\infty} X$ and $Y_{0}=\partial_{\infty} X_{0}$ to denote ideal contact boundaries. Furthermore, we will write $L$ and $C$ for the stable and unstable manifolds of the index $n$ critical points in $H$, and call them the Lagrangian core and co-core disks, respectively. See Figure \ref{fig:handles}.
\begin{figure}[htbp]
   \centering
   \includegraphics[width=.5\linewidth]{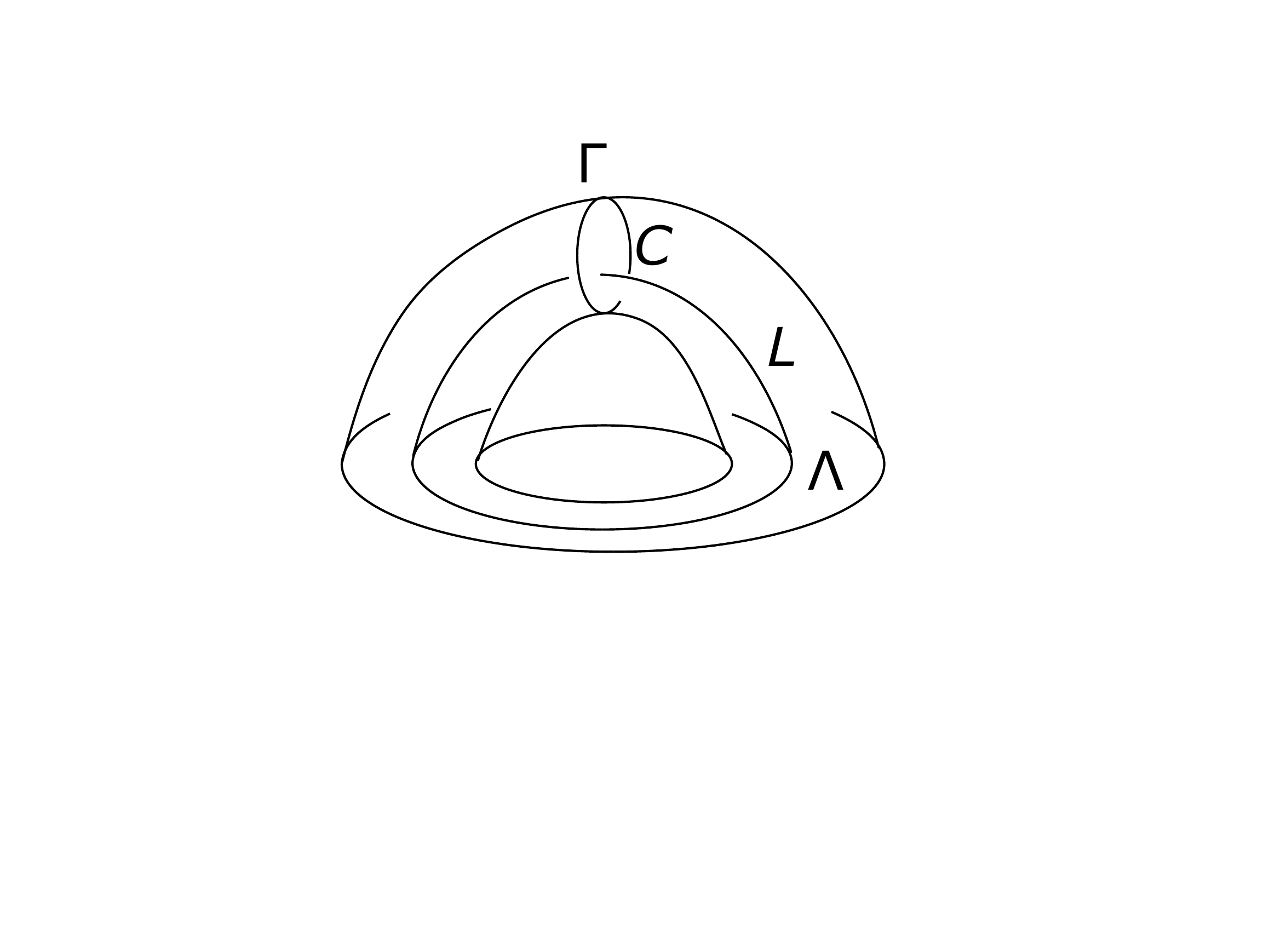} 
   \caption{A Lagrangian handle, core disk $L$ and attaching sphere $\Lambda$, co-core disk $C$ with Legendrian boundary $\Gamma$.}
   \label{fig:handles}
\end{figure}

\section{The basic surgery isomorphism}
In this section we give the basic steps in the Legendrian surgery isomorphism connecting the wrapped Floer cohomology of Lagrangian co-core disks to Chekanov-Eliashberg dg-algebras of Legendrian attaching spheres.

\subsection{Reeb orbits and Reeb chords}
Consider a contact manifold $V$ with contact form $\alpha$, in our case $V=Y=\partial_{\infty} X$  or $V=Y_{0}=\partial_{\infty} X_{0}$ and $\alpha$ is the restriction of $\lambda$. The \emph{Reeb vector field} $R$ of $\alpha$ is the vector field that generates the kernel of $d\alpha$, $\iota_{R}d\alpha=0$, normalized by $\alpha(R)=1$. 

Flow loops of $R$ are called \emph{Reeb orbits}. For generic contact form, Reeb orbits are isolated and transverse (linearized return map in general position). For our treatment of holomorphic curve theories we will use parameterized Reeb orbits. This means that we fix a point on each Reeb orbit and consider all its multiples parameterized, starting the parameterization at that point. If $\Lambda\subset V$ is a Legendrian submanifold then \emph{Reeb chords} of $\Lambda$ are flow lines of $R$ starting and ending on $\Lambda$, again for generic data Reeb chords are isolated and transverse (linearized flow from start point to endpoint in general position). 

We next introduce the chain complexes underlying our main holomorphic curve theories. We start with the theory after handle attachment.

\subsubsection{Generators of the wrapped Floer cohomology complex}\label{gen CW}
Consider a Lagrangian $C\subset X$ with ideal Legendrian boundary $\Gamma\subset Y$. Below $C$ will be the co-core disks of a Lagrangian handle attachment, but the definition makes sense more generally. The wrapped Floer cohomology complex of $C$ is most often defined by picking a Hamiltonian displacement of $C$ with prescribed behavior at infinity and taking generators Hamiltonian chords of $C$ and differential counting perturbed holomorphic curves asymptotic to the generators. For Legendrian surgery purposes it is necessary to use the following equivalent definition. 

Consider a Morse function $f\colon C\to C$ with gradient equal to the Liouville vector field at infinity. Let $C_{-}$ be a time $-\epsilon$ shift along $f$, viewed as a Hamiltonian in $T^{\ast} C$. Then the wrapped Floer cohomology complex $CW^{\ast}(C)$ of $C$ is generated by Reeb chords starting on $C$ and ending on $C_{-}$ and intersection points in $C\cap C_{-}$. Since the shift is small we see that generators correspond to Reeb chords of $\Gamma$ and critical points of $f$, see \cite[Appendix B.1]{EL}. 

\subsubsection{Generators of the Chekanov-Eliashberg dg-algebra}\label{gen CE}
Before handle attachment we use the Chekanov-Eliashberg dg-algebra $CE^{\ast}(\Lambda)$ of a Legendrian $\Lambda\subset Y_{0}$. It is the algebra of composable words of Reeb chords of $\Lambda$, where a word of chords is composable if the Reeb chord endpoint of a chord lies in the same connected component of $\Lambda$ as the start point of the following chord. Multiplication is concatenation, where the product is zero if the result is non-composable.

\subsection{Handle attachement and Reeb dynamics}
The basis of Legendrian surgery isomorphisms is the following `homoclinic orbit' observation.
\begin{lemma}\label{l:newchords}
Reeb chords of of the co-core boundary sphere $\Gamma=\partial_{\infty} C$ are in natural 1-1 correspondence with composable words of Reeb chords of the Legendrian attaching link $\Lambda$. More precisely, given an action level $\mathfrak{a}>0$ there exists a handle size $\delta>0$ such that for all handles of size samller than $\delta$, Reeb chords of $\Gamma$ of action less than $\mathfrak{a}$ are in natural 1-1 correspondence with composable words of Reeb chords of $\Lambda$ of total action less than $\mathfrak{a}$. 
\end{lemma}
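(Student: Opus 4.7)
The plan is to use an explicit Weinstein handle model (in the style of Weinstein, or Cieliebak--Eliashberg) to pin down the Reeb dynamics on $Y = \partial_{\infty} X$ near the handle, and then to decompose bounded-action Reeb chords of $\Gamma$ into alternating pieces that live in $Y_{0}$ (which will become Reeb chords of $\Lambda$) and pieces inside the handle (\emph{handle transitions}).

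First I would fix a neighborhood $U \subset Y$ of the handle region with the property that $Y \setminus U$ is identified with $Y_{0}$ minus a small tubular neighborhood of $\Lambda$, in such a way that Reeb trajectories in $Y \setminus U$ agree, away from the attaching region, with their counterparts in $Y_{0}$. Then, using the explicit contact form in $U$, I would establish, for handle size $\delta$ small enough relative to $\mathfrak{a}$: (i) short Reeb arcs with both endpoints on $\partial U$ near $\Lambda$ are in bijection with ordered pairs of points of $\Lambda$ lying in the same connected component of the attaching link, via a unique small-action \emph{handle transition} arc; (ii) short Reeb arcs from $\Gamma$ to $\partial U$ near $\Lambda$, and conversely, are uniquely determined by the endpoint on $\Lambda$; (iii) no closed Reeb orbits and no Reeb chords of $\Gamma$ of action $\le \mathfrak{a}$ are entirely contained in $U$.

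Granted (i)--(iii), a Reeb chord $c$ of $\Gamma$ of action $\le \mathfrak{a}$ must cross $\partial U$ only finitely many times, and recording those crossings breaks $c$ canonically into an initial $U$-arc from $\Gamma$ to $\Lambda$, an alternating concatenation of $Y_{0}$-arcs and handle transitions, and a final $U$-arc from $\Lambda$ back to $\Gamma$. In the limit $\delta \to 0$ the middle $Y_{0}$-arcs converge to honest Reeb chords $c_{1},\dots,c_{k}$ of $\Lambda$ in $Y_{0}$, and by (i) each handle transition forces the endpoint of $c_{i}$ and the start point of $c_{i+1}$ to lie on the same component of $\Lambda$; the resulting word is therefore composable. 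Conversely, any composable word of Reeb chords of $\Lambda$ of total action $< \mathfrak{a}$ is reassembled into a Reeb chord of $\Gamma$ by inserting the unique initial/final arcs and handle transitions supplied by (i)--(ii). The two actions agree up to an $O(\delta)$ error that is harmless once $\delta$ is chosen so that no broken configuration has total action within $O(\delta)$ of $\mathfrak{a}$.

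The main technical obstacle is establishing (i)--(iii) inside the handle model: one must rule out exotic Reeb arcs that wind through $U$ many times or circulate around the core, and one must verify existence and uniqueness of the short handle-transition arc between any prescribed pair of points on a single component of $\Lambda$. Both points are controlled by the action bound: any additional winding inside $U$ increases the action by a definite amount determined by the model, so for $\delta$ small enough relative to $\mathfrak{a}$ such arcs are simply not present below the threshold. This is the step where the smallness hypothesis on $\delta$ is essential, and where the explicit form of the Weinstein handle model does all the real work.
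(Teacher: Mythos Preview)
Your strategy matches the paper's: identify the Reeb flow on $Y\setminus U$ with that on $Y_0$ away from $\Lambda$, analyze the Reeb dynamics inside the handle via an explicit model, and then cut any bounded-action Reeb chord of $\Gamma$ into alternating $Y_0$-arcs and handle arcs. The paper's sketch is terser but records one concrete input you leave implicit: inside the handle $T^{\ast}D^{n}$ the Reeb flow is the lift of the geodesic flow on $D^{n}$, so that the flow carries each incoming fiber hemisphere of the attaching neighborhood onto the boundary of the attaching region by a \emph{degree one} map, and the bijection is then obtained by a fixed-point argument. That degree-one/fixed-point step is exactly what makes your reassembly precise: for finite handle size $\delta$ the $Y_0$-arcs are only approximate Reeb chords of $\Lambda$, and the pieces do not literally concatenate; one must solve a small matching problem to produce an honest Reeb chord of $\Gamma$ from a given composable word, and the degree-one property is what guarantees a (unique) solution. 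Your items (i)--(ii) are the right targets, but their proof is this fixed-point argument rather than a direct ``insert the unique transition'' construction.
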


\begin{proof}
Outside the handle attachment region the Reeb flows in $Y_{0}$ and $Y$ are naturally identified. We take the co-core as the fiber of $T^{\ast} D$ at the center of $D$. Inside the handle the Reeb flow is the lift of the geodesic flow. In particular, the Reeb flow takes fiber hemispheres of the attaching neighborhood to the boundary of the attaching region by a degree one map. A straightforward fixed point argument now establishes the 1-1 correspondence, see \cite[Section 5]{Ecurves} for details.  
\end{proof}

\subsection{Wrapped Floer cohomology}
We first consider the differential $d$ in the wrapped Floer cohomology complex $CW^{\ast}(C)$. This differential counts anchored holomorhic strips with a positive and negative puncture at a Reeb chord connecting $\Gamma_{-}$ to $\Gamma$ or an intersection point between $C_{-}$ and $C$. Here anchoring means that we consider disks with additional negative punctures at marked orbits and pure chords where we fill with rigid punctured spheres and disks. The basic property of the curve counting map is the following.

\begin{lemma}
	The map $d\colon CW^{\ast}(C)\to CW^{\ast}(C)$ is a differential $d^2=0$.
\end{lemma}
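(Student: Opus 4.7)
The plan is to follow the standard strategy for Floer-type differentials: identify the matrix coefficients of $d^2$ with a signed count of the boundary of the $1$-dimensional component of the moduli space of anchored holomorphic strips, and then show, via SFT compactness, that this count vanishes. Fix generators $a,b$ of $CW^\ast(C)$. After choosing generic almost complex structure on $X$ and generic Morse function $f$ on $C$, standard transversality arguments (together with transversality for the rigid sphere and rigid disk pieces used as anchors) ensure that the moduli space $\mathcal{M}(a,b)$ of anchored strips from $a$ to $b$ is cut out transversely, with the $0$-dimensional part defining the matrix coefficient of $d$ and the $1$-dimensional part $\mathcal{M}_1(a,b)$ a smooth $1$-manifold.

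The next step is to analyze $\partial \mathcal{M}_1(a,b)$ by SFT and Gromov compactness. The codimension-$1$ degenerations come in three kinds: (i) splitting of the main strip along an intermediate generator $c$ into two anchored strips, whose total count is precisely the coefficient $\langle d^2 a,b\rangle$; (ii) degeneration within the anchoring data, where a rigid punctured sphere breaks along a Reeb orbit of $Y$, or a rigid punctured disk breaks along a pure Reeb chord; (iii) bubbling of closed spheres or boundary disks, which is excluded by exactness of $X$ and of $C$. The key input is that the anchors are chosen so that the signed counts of rigid punctured spheres and rigid punctured disks satisfy master equations: they annihilate the SFT differential on closed orbits and the Chekanov-Eliashberg differential on pure chords, respectively. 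This is essentially a Maurer-Cartan/augmentation condition, and it forces all boundary contributions of type (ii) to cancel in pairs. Hence the signed count of $\partial \mathcal{M}_1(a,b)$ reduces to $\langle d^2 a,b\rangle$, and since a compact $1$-manifold has zero signed boundary, $d^2=0$.

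The main obstacle is the compactness and transversality package for the anchored setup. One must verify that the list (i)-(iii) is exhaustive, that rigid anchoring moduli spaces can be made transverse simultaneously with the main moduli problem, and that no Reeb orbit or chord contributions escape asymptotically in a way that breaks the pairing of type (ii) boundary points. In Eliashberg's surgery picture this is precisely the role of anchoring: it packages the global SFT compactness into a local master equation, turning the vanishing $d^2=0$ into a formal consequence of the augmentation properties of the anchoring data.
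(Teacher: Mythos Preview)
Your argument is correct and is exactly the approach the paper takes: the paper's entire proof is the one-line remark that $d^{2}$ counts the ends of a $1$-dimensional moduli space, with a reference to \cite[Appendix B.1]{EL} for the analytic details. Your writeup simply unpacks that sentence into its standard SFT constituents (strip-breaking giving $d^{2}$, cancellations from the anchoring data, and exclusion of bubbling by exactness), so there is no substantive difference in method.
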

   
\begin{proof}
The square of the differential counts the ends of a 1-dimensional moduli space, see \cite[Appendix B.1]{EL}.
\end{proof}   

In fact, the differential is the $\mu_{1}$-operation in an $A_{\infty}$-algebra structure on $CW^{\ast}(C)$, where the operation $\mu_{k}$ is defined by choosing $k$ parallel copies of $C$, where $C_{-}$ is the first parallel copy which is the time $-1$ shift along the function $\epsilon f_{1}$ for $f_{1}=f$ and the $k^{\rm th}$ copy is the time $-1$ shift along the function  
\[
\left(\sum_{r=1}^{k} \epsilon^{k}\right) f_{1} + \left(\sum_{r=2}^{k} \epsilon^{3+r}\right)f_{2} + \dots +
\epsilon^{k+1}f_{k}.
\] 

Then the $\mu_{k}$-operation counts disks with positive punctures connecting copies in increasing order and one negative output puncture, see Figure \ref{fig:CWdiff}. 
\begin{figure}[htbp]
   \centering
   \includegraphics[width=.6\linewidth]{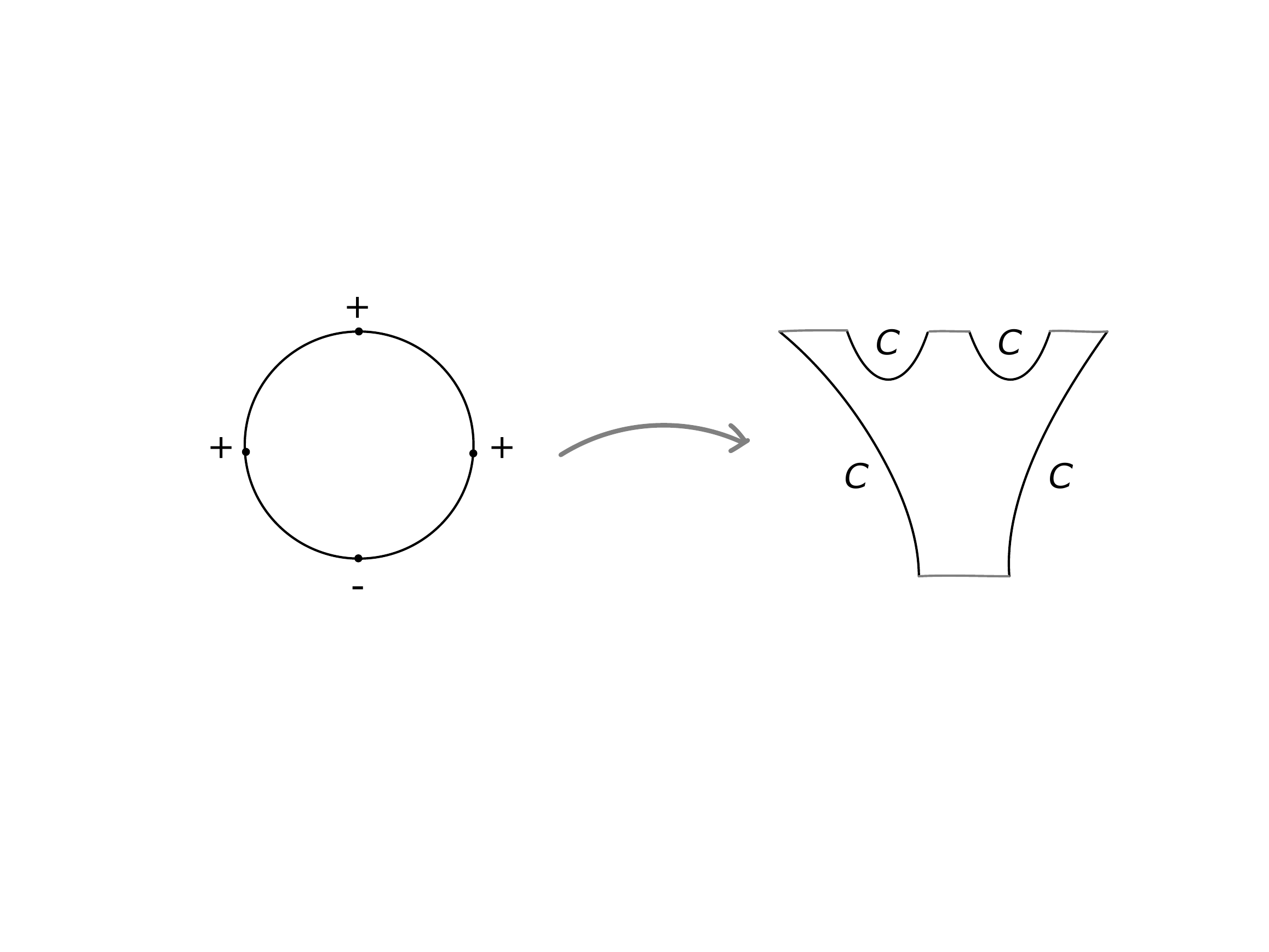}
   \caption{Holomorphic disks for the $\mu_k$-operations on $CW^{\ast}(C)$.}
   \label{fig:CWdiff}
\end{figure}

We then have the following result.

\begin{lemma}
The operations $\mu_{k}\colon CW^{\ast}(C)^{\otimes k}\to CW^{\ast}(C)$ satisfy the $A_{\infty}$-relations.
\end{lemma}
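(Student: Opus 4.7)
The plan is the standard Floer-theoretic argument. Fix a generator $a_{\mathrm{out}}$ of $CW^{\ast}(C)$ and generators $a_{1}, \ldots, a_{n}$, each placed as an input between a prescribed consecutive pair of parallel copies, and consider the moduli space $\mathcal{M}(a_{\mathrm{out}}; a_{1}, \ldots, a_{n})$ of anchored holomorphic disks with these asymptotics and boundary on the cyclic sequence of parallel copies used to define $\mu_{n}$. By construction, the zero-dimensional part of this moduli space counts the coefficient of $a_{\mathrm{out}}$ in $\mu_{n}(a_{1}, \ldots, a_{n})$, so the $A_{\infty}$-relation applied to $a_{1} \otimes \cdots \otimes a_{n}$ will follow by identifying the signed count of the boundary of the one-dimensional component with the sum of $\mu_{r+1+t}(a_{1}, \ldots, a_{r}, \mu_{s}(a_{r+1}, \ldots, a_{r+s}), a_{r+s+1}, \ldots, a_{n})$ over splittings $r + s + t = n$.

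Next I would compactify. SFT compactness combined with the anchoring prescription shows that the boundary of the one-dimensional component consists of two-level holomorphic buildings. The role of the anchoring---filling extra negative punctures at Reeb orbits and pure Reeb chords by rigid punctured spheres and disks---is precisely to absorb the breakings in which only pure-copy data is exchanged, so that these contribute no net boundary. What remains are genuine two-disk breakings along a mixed chord $b$ running between two of the parallel copies. Any such configuration splits uniquely into an $s$-input sub-disk with inputs $a_{r+1}, \ldots, a_{r+s}$ and output $b$, glued to an $(r+1+t)$-input super-disk with inputs $a_{1}, \ldots, a_{r}, b, a_{r+s+1}, \ldots, a_{n}$ and output $a_{\mathrm{out}}$, for some $r + s + t = n$.

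The key combinatorial observation is that the hierarchy of shift coefficients along the nested Morse functions $f_{1}, \ldots, f_{k}$ is engineered so that the boundary copies of the sub-disk correspond, after rescaling $\epsilon$, to a consecutive subsequence of parallel copies realising the configuration that defines $\mu_{s}$, while the complementary copies assemble into exactly the configuration that defines $\mu_{r+1+t}$. Consequently every broken configuration is counted with sign by a single term $\mu_{r+1+t}(a_{1}, \ldots, \mu_{s}(a_{r+1}, \ldots, a_{r+s}), \ldots)$, and the vanishing of the total signed boundary of the compact oriented one-manifold $\overline{\mathcal{M}}(a_{\mathrm{out}}; a_{1}, \ldots, a_{n})$ yields the claimed relation.

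The main obstacle is the analytic underpinning: one must verify that the anchoring is self-consistent in families (rigid fillings on either side of a breaking glue to rigid fillings of the unbroken disk), and that transversality can be achieved simultaneously for all splitting types $(r, s, t)$ while the sign assignment matches the $A_{\infty}$ signs dictated by the Koszul convention. These points are standard but delicate, and are exactly where the SFT-type compactness and gluing results cited in the paper are invoked.
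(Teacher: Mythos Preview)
Your proposal is correct and follows essentially the same route as the paper: the $A_{\infty}$-relations arise from identifying the ends of one-dimensional moduli spaces with two-level breakings, and the crucial point is that the hierarchical shifting functions ensure the sub-disk and super-disk pieces match the moduli spaces defining $\mu_{s}$ and $\mu_{r+1+t}$. The paper phrases this last step slightly differently---rather than ``rescaling $\epsilon$'', it says that for sufficiently close parallel copies the moduli spaces for \emph{any} increasing numbering of copies are canonically identified, because transversely cut out solutions persist under small perturbation---but this is the same mechanism you describe.
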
 

\begin{proof}
Again the compositions count the ends of a 1-dimensional moduli space. The proof uses that for sufficiently close parallel copies, moduli spaces with boundary components on Lagrangians corresponding to any increasing numbering in the system of parallel copies are be canonically identified. This follows from the form of the shifting functions and the fact that transversely cut out solutions persist under sufficiently small perturbation, see \cite[Appendix B.1]{EL}.
\end{proof}

\subsection{The Chekanov-Eliashberg dg-algebra}
We next consider the differential $\partial$ of the Chekanov-Eliashberg dg-algera. The differential counts holomorphic disks in the symplectization $Y_{0}\times \R$ anchored at orbits with one positive and several negative punctures, see Figure \ref{fig:CEdiff}. 
\begin{figure}[htbp]
   \centering
   \includegraphics[width=.6\linewidth]{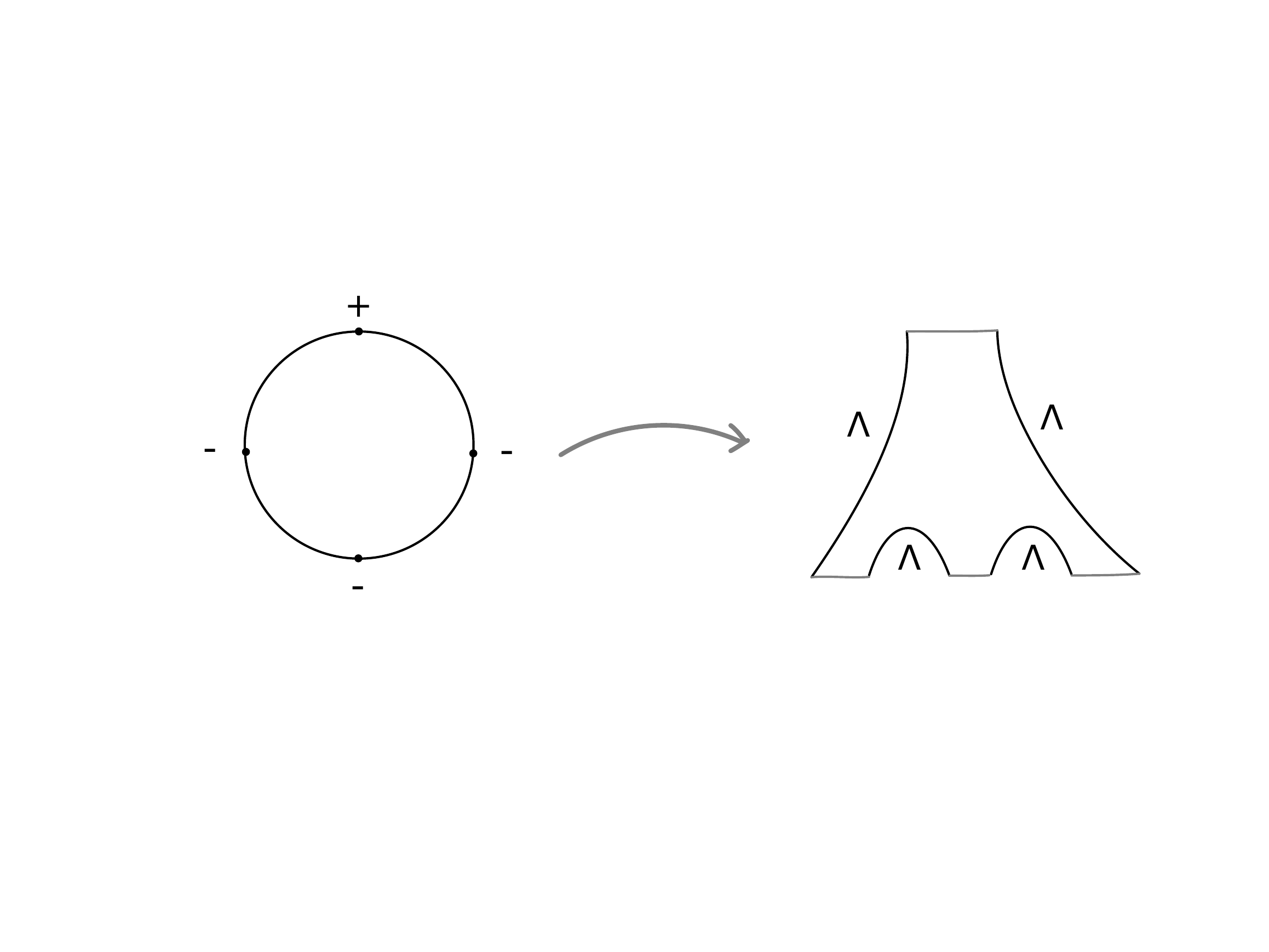}
   \caption{Holomorphic disks for the differential on $CE^{\ast}(\Lambda)$.}
   \label{fig:CEdiff}
\end{figure}

We extend it to a dg-algebra differential by the Leibniz rule.

\begin{lemma}
The map $\partial\colon CE^{\ast}(\Lambda)\to CE^{\ast}(\Lambda)$ is a differential, $\partial^{2}=0$.
\end{lemma}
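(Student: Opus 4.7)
The plan is to extend the chord-level differential to the free algebra $CE^{\ast}(\Lambda)$ by the Leibniz rule, then verify $\partial^{2}=0$ on each Reeb chord generator. Since $\partial$ is a derivation, $\partial^{2}$ is also a derivation, so on a composable word $a_{1}\cdots a_{k}$ it splits as a sum of terms $\pm a_{1}\cdots \partial^{2}(a_{i})\cdots a_{k}$ together with cross terms of the form $\pm a_{1}\cdots (\partial a_{i})(\partial a_{i+1})\cdots a_{k}$; the latter appear twice with opposite signs and cancel. Hence it suffices to show $\partial^{2}a=0$ for a single Reeb chord generator $a$.

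Fix $a$ and a composable word $\mathbf{b}=b_{1}\cdots b_{m}$ of Reeb chords of $\Lambda$, and consider the moduli space $\mathcal{M}(a;\mathbf{b})$ of anchored holomorphic disks in $Y_{0}\times\R$ with a positive puncture at $a$, cyclically ordered negative boundary punctures at $b_{1},\dots,b_{m}$, and any number of additional negative interior punctures at Reeb orbits capped off by a chosen rigid system of punctured spheres and disks in the filling. Modulo the $\R$-translation action, the formal dimension one component $\mathcal{M}^{1}(a;\mathbf{b})$ is expected to be a compact 1-manifold with boundary, and the coefficient of $\mathbf{b}$ in $\partial^{2}a$ will equal the signed count of its boundary points.

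The next step is to identify the boundary of the compactification $\overline{\mathcal{M}^{1}(a;\mathbf{b})}$ via SFT compactness. Under exactness of the symplectization and the anchoring conventions, every sequence escaping to the boundary converges to a two-level holomorphic building: a top-level anchored disk with positive puncture at $a$ and one distinguished negative chord puncture at some intermediate Reeb chord $c$ (together with other negative chord punctures), glued to a bottom-level anchored disk with positive puncture at $c$ whose remaining negative chord asymptotics, interleaved with those of the top level, concatenate to the word $\mathbf{b}$. Sphere bubbling and interior disk bubbling are excluded by exactness, and orbit-splitting degenerations cancel in pairs or are absorbed into the anchoring. Each such two-level building contributes exactly one term to the coefficient of $\mathbf{b}$ in $\partial(\partial a)$, so the vanishing of the signed count of boundary points of a compact oriented 1-manifold gives $\partial^{2}a=0$.

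The main technical obstacle is the analytic foundation, rather than the combinatorial count: one must choose a coherent system of abstract or geometric perturbations making all anchored moduli spaces transversely cut out and compatible with SFT breaking, and prove a gluing theorem realizing every rigid two-level building as exactly one end of the corresponding 1-dimensional moduli space. In particular, the anchoring choice of capping spheres and disks must be made once and for all so that breaking at an orbit puncture produces a configuration that has already been incorporated into the anchoring, ensuring the only genuine boundary contributions are chord splittings. Once these standard but delicate perturbation and gluing inputs (parallel to those used for $CW^{\ast}(C)$ in the references cited above) are in place, the $\partial^{2}=0$ identity follows from the boundary-counting argument.
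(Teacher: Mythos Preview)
Your argument is correct and follows the same route as the paper: the paper's proof is the single sentence ``the square of the differential counts ends of a 1-dimensional moduli space,'' which is precisely the SFT-compactness/gluing argument you spell out in detail. One small notational slip: the cross terms in $\partial^{2}(a_{1}\cdots a_{k})$ involve $\partial a_{i}$ and $\partial a_{j}$ for any $i<j$, not just adjacent indices, but the cancellation and the reduction to generators go through as you describe.
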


\begin{proof}
The square of the differential counts ends of a 1-dimensional moduli space, see e.g., \cite{EL}.
\end{proof}

\subsection{The chain map}
The cobordism $W=X\setminus X_{0}$, with the Lagrangians $L$ and $C$ with connected components that intersect at single points in the middle of each handle, gives a natural chain map 
\begin{equation}\label{eq:CW to CE}
\Phi^{CW}\colon CW^{\ast}(C)\to CE^{\ast}(\Lambda),\quad 
\Phi^{CW}=\sum_{k=1}^{\infty}\Phi^{CW}_{j},
\end{equation}
More precisley, $\Phi_{k}^{CW}$ counts holomorphic curves with $k$ positive punctures at Reeb chords of $C$, two punctures where the map is asymptotic to intersections in $C\cap L$ and several negative punctures at Reeb chords of $\Lambda$, where moduli spaces of many positive punctures of $C$ are defined using systems of parallel copies as in the definition of the $A_{\infty}$-operations in order to avoid `boundary breaking', see Figure \ref{fig:CWtoCE}.
\begin{figure}[htbp]
   \centering
   \includegraphics[width=.6\linewidth]{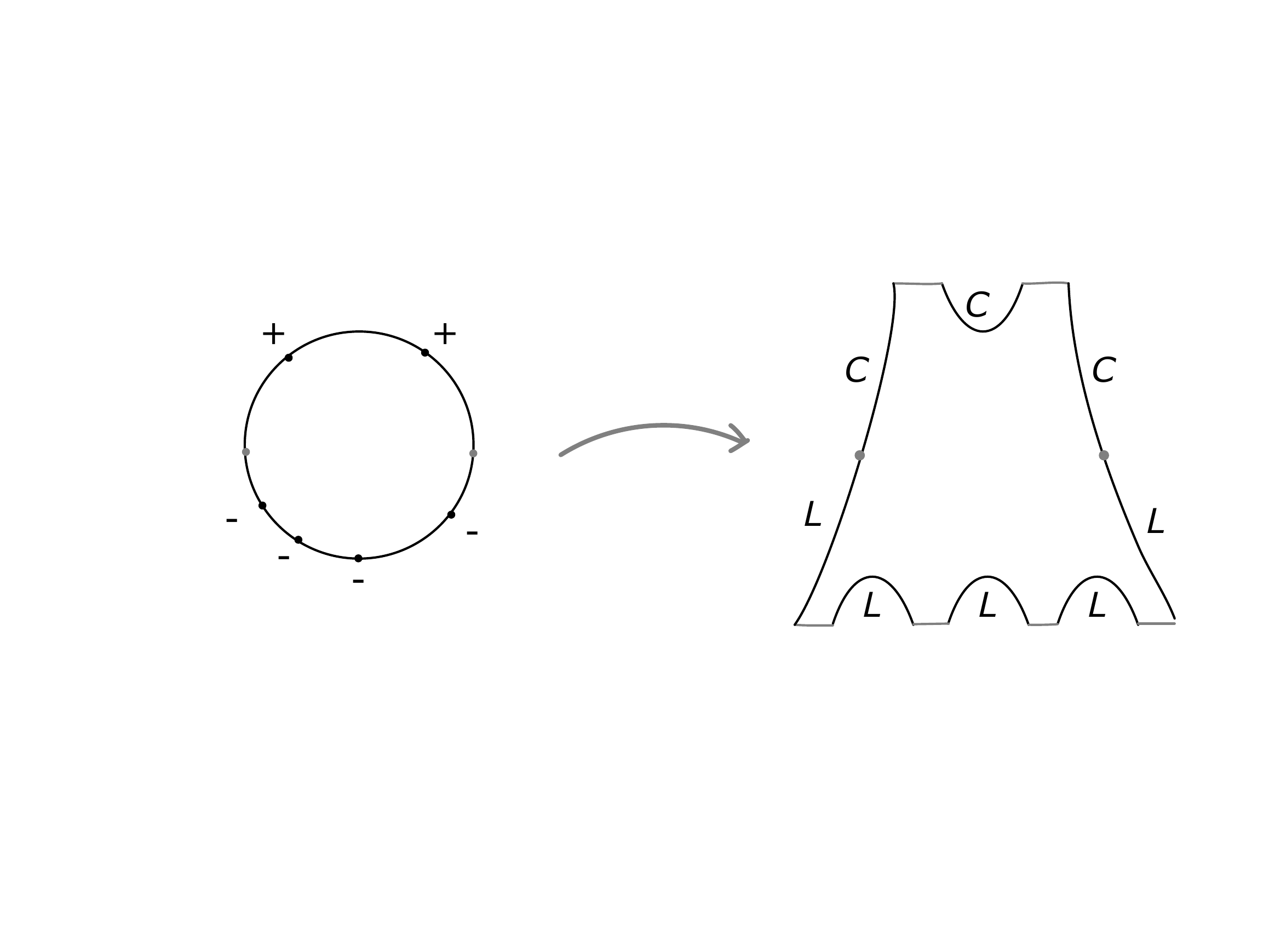}
   \caption{Holomorphic disks for the surgery map $CW^{\ast}(C)\to CE^{\ast}(\Lambda)$.}
   \label{fig:CWtoCE}
\end{figure}
We view $CE^{\ast}(\Lambda)$ as an $A_{\infty}$-algebra generated by words of Reeb chords with differential $\mu_{1}=\partial$, with $\mu_{2}$ given by the concatenation product, and with all higher $\mu_k$ equal to zero.

\begin{lemma}
The map $\Phi^{CW}=\sum_{k}\Phi_{k}^{CW}$ in \eqref{eq:CW to CE} is an $A_{\infty}$-map.
\end{lemma}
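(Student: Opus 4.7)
The plan is to interpret both sides of the $A_\infty$-functor equation as counts of boundary points of a suitable $1$-dimensional moduli space of $\Phi^{CW}$-type curves and to invoke SFT Gromov compactness. Because $CE^*(\Lambda)$ is concentrated in $\mu_1=\partial$ and $\mu_2=$ concatenation (higher $\mu_k$ vanish), the $A_\infty$-functor equation for $\Phi^{CW}=\sum_k \Phi^{CW}_k$ reduces to
\[
\partial\circ\Phi^{CW}_k \;+\; \sum_{s_1+s_2=k}\mu_2\bigl(\Phi^{CW}_{s_1},\Phi^{CW}_{s_2}\bigr) \;=\; \sum_{\substack{i+j=k+1\\ 0\le\ell\le i-1}} \Phi^{CW}_i\bigl(\id^{\otimes\ell}\otimes\mu_j\otimes\id^{\otimes(i-\ell-1)}\bigr),
\]
and these three families of terms will correspond to three types of codimension-one degenerations.

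First I would set up, for each $k$, the relevant $1$-dimensional moduli space: anchored curves with $k$ ordered positive punctures at Reeb chords between consecutive parallel copies of $C$ (using the same cascading shift scheme $\epsilon f_1,\epsilon^3 f_2,\dots$ as in the $\mu_k$ construction), two punctures asymptotic to intersection points of $C\cap L$, and an arbitrary ordered tuple of negative punctures at Reeb chords of $\Lambda$, with rigid capping by orbit spheres and pure-chord disks to handle anchoring. Transversality is achieved by the standard perturbation scheme for anchored disks, and SFT compactness provides a compactification.

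Next I would enumerate the codimension-one strata. Three types of breakings occur. (i) Breaking at a positive $C$-Reeb chord cuts off a disk counted by some $\mu_j$ of $CW^*(C)$ at the top, leaving a $\Phi^{CW}_i$-disk at the bottom, with $i+j=k+1$; summing over such breakings yields the right-hand side of the displayed equation. (ii) Breaking at a negative $\Lambda$-Reeb chord leaves a $\Phi^{CW}_k$-disk on top and a disk in the symplectization $Y_0\times\R$ contributing to $\partial$ at the bottom, reproducing $\partial\circ\Phi^{CW}_k$. (iii) Node formation at an intersection point of $C\cap L$ splits the curve into two $\Phi^{CW}$-type pieces sharing the new nodal corner, with the original two $C\cap L$-corners and the $k$ positive inputs distributed between them so that each piece has the required two $C\cap L$-corners and total input count $s_1+s_2=k$; concatenating their output words of $\Lambda$-chords gives a term in $\mu_2(\Phi^{CW}_{s_1},\Phi^{CW}_{s_2})$. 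Matching the three families of ends against the three families of terms yields the $A_\infty$-relation.

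The main obstacle is the parallel-copies bookkeeping in step one. One must exclude unwanted boundary breaking in which a short strip between two consecutive copies of $C$ peels off, and one must verify that the top-level piece produced by degeneration (i) is really a $\mu_j$-configuration with its boundary running in the correct monotone order on the chain of copies. Both issues are handled by the cascading shift hierarchy already exploited in the $\mu_k$ lemma: the geometric smallness of the successive shifts forces boundary arcs on the $C$-copies to land on increasing sequences of copies, and the canonical identification of moduli spaces with such increasing boundary labels on nearby parallel copies (from \cite[Appendix B.1]{EL}) applies verbatim. A smaller technical point is anchor consistency: choosing a single fixed system of rigid capping spheres and pure-chord disks once and for all, used simultaneously in the definitions of $\partial$, of the $\mu_j$ on $CW^*(C)$, and of the $\Phi^{CW}_i$, ensures that anchors on the two pieces of a broken configuration reassemble without extra contributions.
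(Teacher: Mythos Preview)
Your proposal is correct and follows essentially the same approach as the paper's own proof: you analyze the boundary of the $1$-dimensional moduli space of $\Phi^{CW}$-curves and identify the same three codimension-one degenerations---breaking at a positive $\Gamma$-chord (precomposition with $\mu_j$), breaking at a negative $\Lambda$-chord (post-composition with $\partial=\mu_1$), and breaking at a point of $C\cap L$ (post-composition with $\mu_2$). Your treatment is in fact more detailed than the paper's sketch, explicitly addressing the parallel-copy bookkeeping and anchor consistency that the paper defers to \cite[Appendix B.2]{EL}.
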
 

\begin{proof}
To see this we consider $1$-dimensional moduli spaces of disks as in the definition of $\Phi_{k}$ and note that the boundary points of such a moduli does correspond to splittings at Reeb chords, which is precomposing $\Phi_{k}$ with $\mu_{j}$ in the positive end or post-composing $\Phi_{k}$ with $\mu_{1}$ at the negative end, or splitting at one of the Lagrangian intersection points in $C\cap L$, which corresponds to post-composing $\Phi_{j}$ with $\mu_{2}$ in the negative end. The lemma follows, see \cite[Appendix B.2]{EL} for more details. 
\end{proof}

\subsection{Chain isomorphism}
We show in this section that the $A_{\infty}$-map $\Phi^{CW}$ in \eqref{eq:CW to CE} is in fact a chain isomorphism. More precisely, we have the following.
\begin{lemma}
For any action cut-off $\mathfrak{a}_{0}>0$, the image under $\Phi_{1}^{CW}$ of a Reeb chord $\overline{w}$ of $\Gamma$ corresponding to a word $w$ of Reeb chords of $\Lambda$ of action $\mathfrak{a}(\overline w)<\mathfrak{a}_{0}$  is
\[
\Phi_{1}^{CW}(\overline{w}) \ = \ \pm w  \ + \ E(\overline{w}),
\]
where $E(\overline{w})$ is a sum of Reeb chord words $v$ of action $\mathfrak{a}(v)< \mathfrak{a}(w)$. 
\end{lemma}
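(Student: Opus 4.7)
My plan is to combine an action inequality for holomorphic disks in the exact cobordism $W$ with an explicit identification of a distinguished rigid model disk realizing the geometric correspondence of Lemma~\ref{l:newchords}. Throughout I view the handle size $\delta>0$ as a small parameter and fix the action cut-off $\mathfrak{a}_{0}>0$.

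I would start with a straightforward action estimate. By exactness and positivity of energy, any holomorphic disk contributing to $\Phi_{1}^{CW}(\overline{w})$ with output word $v$ satisfies $\mathfrak{a}(v)\le\mathfrak{a}(\overline{w})$. The proof of Lemma~\ref{l:newchords} shows that $\mathfrak{a}(\overline{w})=\mathfrak{a}(w)+O(\delta)$, because the bridging segments of $\overline{w}$ inside the handle have length $O(\delta)$. For generic contact data the set of actions of composable $\Lambda$-chord words of total action below $\mathfrak{a}_{0}$ is finite, so any two distinct such words have a definite action gap. Shrinking $\delta$ below this gap, every output word $v$ either satisfies $\mathfrak{a}(v)=\mathfrak{a}(w)$, which forces $v=w$ by genericity, or $\mathfrak{a}(v)<\mathfrak{a}(w)$, in which case it contributes to $E(\overline{w})$.

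Next I would compute the coefficient of $w$ by exhibiting a unique rigid model disk. By the correspondence of Lemma~\ref{l:newchords}, the chord $\overline{w}$ geometrically follows the chords of $w$ outside the handle joined by short transit segments inside. The model disk then has two boundary arcs: one on $C$ carrying the positive puncture $\overline{w}$, and one on $L$ carrying the negative $w$-punctures in cyclic order, meeting at two corners at the transverse intersection point $C\cap L$ at the centre of the handle. In the SFT limit $\delta\to 0$, SFT compactness forces any contributing disk to degenerate into a building consisting of trivial strips over the $\Lambda$-chords of $w$ in the symplectization of $Y_{0}$, glued to a rigid half-disk in the handle model $T^{\ast}D^{n}$ attached at the intersection point. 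A standard gluing theorem then produces, for $\delta$ small enough, a unique transversely cut out disk from this building, contributing with sign $\pm 1$.

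The main obstacle is the neck-stretching and gluing analysis of the preceding paragraph: one must classify the rigid half-disks in the handle model $T^{\ast}D^{n}$ with the prescribed asymptotic behaviour and verify that the resulting building glues uniquely to the outside, simultaneously ruling out any other holomorphic disk with output $w$ for small $\delta$. This is the holomorphic curve counterpart of the dynamical $1$--$1$ correspondence in Lemma~\ref{l:newchords}, and the detailed argument is carried out in \cite[Section~5]{Ecurves} and \cite[Appendix~B.2]{EL}.
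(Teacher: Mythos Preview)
Your action estimate in the first paragraph is correct and is also the first step in the paper's argument: once $\delta$ is below the minimal action gap among words of action $<\mathfrak{a}_0$, any output $v\ne w$ necessarily has $\mathfrak{a}(v)<\mathfrak{a}(w)$ and lands in $E(\overline{w})$.

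Where your proposal departs from the paper is in computing the coefficient of $w$. You attempt a direct SFT degeneration as $\delta\to 0$ for words of arbitrary length. The paper instead proceeds by \emph{induction on word length}, using the $A_\infty$-map structure of $\Phi^{CW}$. The base case $|w|=1$ is handled by an explicit model strip together with the action argument, essentially as you sketch. For the inductive step with $w=w_0w_1$, one considers the $1$-dimensional moduli space of disks with \emph{two} positive punctures at $\overline{w}_0,\overline{w}_1$ and negative punctures according to $w_0w_1$. At the top action level its only boundary configurations are (i) a break at the Lagrangian intersection point in $C\cap L$, producing two shorter isomorphism disks whose count is $\pm 1$ by induction, and (ii) a break at a Reeb chord of $\Gamma$ into a $\mu_2$-type triangle followed by the isomorphism disk for $\overline{w}$. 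Since the total signed boundary count vanishes, the product in (ii) must also equal $\pm 1$, which gives the inductive step; see Figure~\ref{fig:oneone} and \cite[Section~7]{Ecurves}.

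Your direct route is not wrong in spirit, but two points deserve care. First, the limit building for a length-$m$ word is not a single ``rigid half-disk'' in the handle: the chord $\overline w$ transits the handle repeatedly, so after rescaling one obtains several handle pieces joined to the $m$ trivial strips, and classifying all of them together with their gluings is precisely the analysis the inductive argument sidesteps by reducing to $m=1$. Second, the references you invoke do not contain the stretching argument you describe: \cite[Section~5]{Ecurves} establishes the Reeb-dynamical correspondence of Lemma~\ref{l:newchords}, and \cite[Appendix~B.2]{EL} proves the $A_\infty$-map property and the filtration step; the actual $\pm 1$ count is in \cite[Section~7]{Ecurves} via the inductive route above.
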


\begin{proof}
The proof uses induction on action. For one letter words an explicit construction gives one transversely cut out strip which is unique by an action argument. Assume inductively that the count of curves in moduli space of disks connecting $\overline{w}$ to $w$ for all words of length $\le m$ equals $\pm 1$. Then consider the moduli space of disks with two positive punctures at $\overline{w}_{0}$ and $\overline{w}_{1}$ and negative punctures at $w_{0}w_{1}$. By action this moduli space has only two boundary breakings, see Figure \ref{fig:oneone}: breaking at a point in $C\cap L$, by induction we know that the count of such configurations equal $\pm 1$, and breaking into a disk with two positive and one negative puncture followed by an isomorphism disk, we conclude that both of these moduli spaces must also contain $\pm 1$ elements, see \cite[Section 7]{Ecurves}.  
\end{proof}

\begin{figure}[htbp]
   \centering
   \includegraphics[width=.5\linewidth]{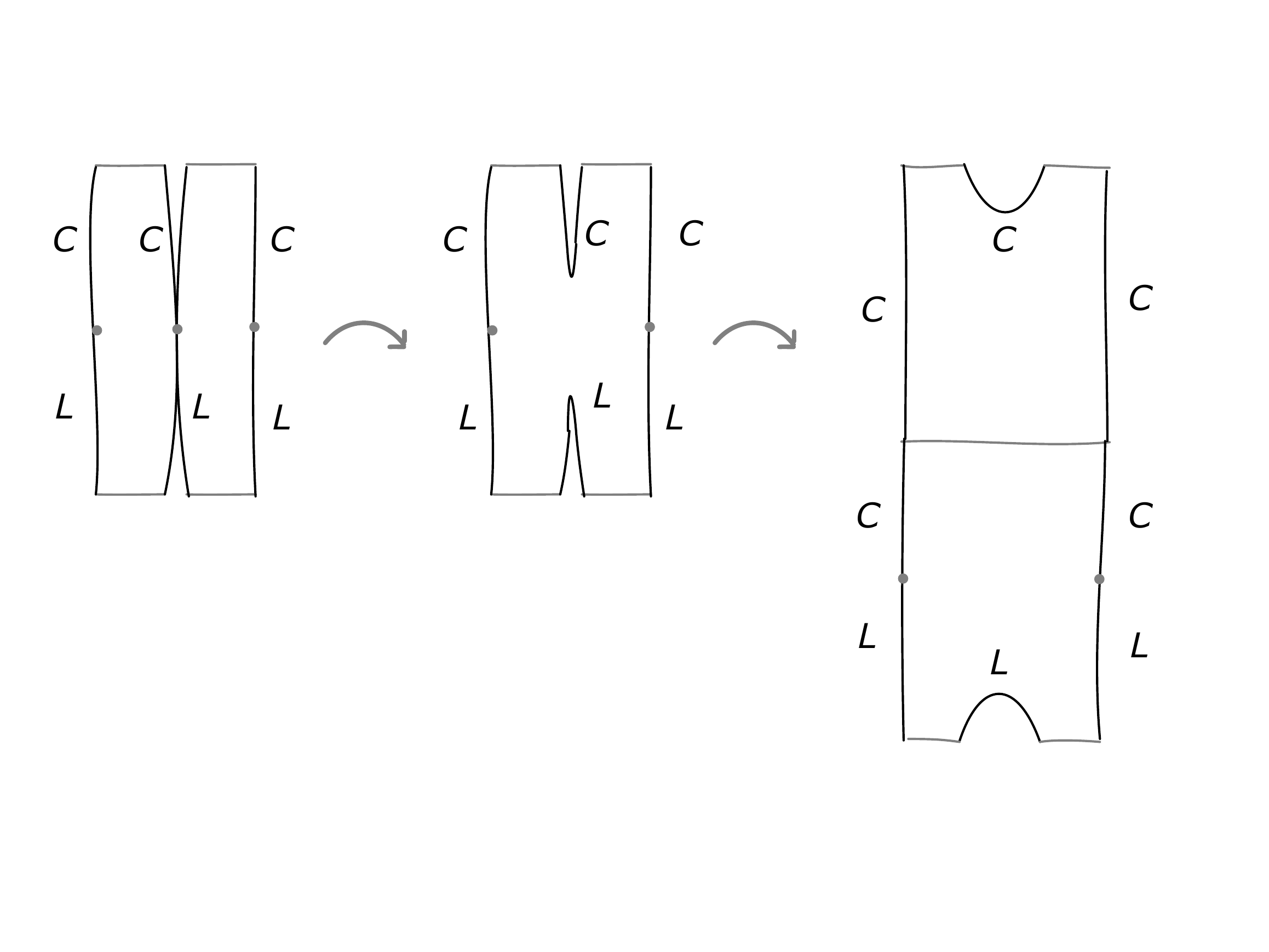}
   \caption{Constructing isomorphism disks by gluing.}
   \label{fig:oneone}
\end{figure}

Using a straightforward action filtration argument, see \cite[Appendix B.2]{EL}, we then get the main surgery isomorphism theorem:
\begin{theorem}\label{t:CW to CE}
The natural $A_{\infty}$-chain map 
\[
\Phi^{CW}\colon CW^{\ast}(C)\to CE^{\ast}(\Lambda)
\]
is a quasi-isomorphism.\qed
\end{theorem}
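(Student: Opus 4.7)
The plan is to reduce to showing that the linear piece $\Phi_1^{CW}$ is a quasi-isomorphism of chain complexes, and then run an action filtration argument using the preceding lemma. Since an $A_\infty$-morphism is a quasi-isomorphism iff its first-order component induces an isomorphism on cohomology, this reduction is sound, and the higher $\Phi_k^{CW}$ play no role in the theorem.

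For each action level $\mathfrak{a} > 0$, I would fix the handle size $\delta > 0$ small enough that Lemma \ref{l:newchords} applies, and consider the action-truncated subcomplexes $CW^*(C)^{\leq \mathfrak{a}}$, spanned by Reeb chords of $\Gamma$ of action at most $\mathfrak{a}$ together with the intersection points in $C\cap C_-$, and $CE^*(\Lambda)^{\leq \mathfrak{a}}$, spanned by composable Reeb chord words of $\Lambda$ of total action at most $\mathfrak{a}$. By Lemma \ref{l:newchords} these generating sets are in bijection, after matching the finitely many Morse generators on $C$ with the unit-like low-action generators on the $CE^*$ side via the handle model where each co-core meets its core transversally at a single point. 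Monotonicity of action for holomorphic curves then ensures both differentials preserve these filtrations, and that $\Phi^{CW}$ sends $CW^*(C)^{\leq \mathfrak{a}}$ into $CE^*(\Lambda)^{\leq \mathfrak{a}}$.

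The preceding lemma gives $\Phi_1^{CW}(\overline w) = \pm w + E(\overline w)$ with every word appearing in $E(\overline w)$ of strictly smaller action than $w$. Ordering a basis by increasing action on both sides, $\Phi_1^{CW}$ is therefore an upper-triangular linear map with units on the diagonal, so it restricts to a chain isomorphism on each action-truncated subcomplex. Passing to the colimit as $\mathfrak{a}\to\infty$, which computes both $HW^*(C)$ and the cohomology of $CE^*(\Lambda)$, upgrades this to a quasi-isomorphism in the limit. The main technical point I expect to need care is calibrating the handle size against the action cut-off: one must guarantee that every new Reeb chord of $\Gamma$ arising from a composable word $w$ has action bounded below by $\mathfrak{a}(w)$, with equality realized only by the diagonal contribution and strict inequality for the lower-action error terms. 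This hinges on the explicit handle model of the Reeb flow underlying Lemma \ref{l:newchords} and on the gluing construction supplying the $\pm w$ term in the preceding lemma, as carried out in \cite[Appendix B.2]{EL}.
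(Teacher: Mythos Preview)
Your proposal is correct and follows essentially the same route as the paper: the paper derives the theorem from the preceding lemma by a ``straightforward action filtration argument'' (with reference to \cite[Appendix B.2]{EL}), and what you have written is exactly a spelling-out of that argument---triangularity of $\Phi_1^{CW}$ with units on the diagonal on each action-truncated piece, then passage to the limit. Your remark about calibrating the handle size against the action cut-off is the one genuine technical point, and it is precisely what the action-dependent $\delta$ in Lemma~\ref{l:newchords} is set up to handle.
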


\section{Further results}
The Legendrian surgery proof is powerful and can be used to derive many closely related results. In this section we will discuss some of them.

\subsection{Symplectic homology and linearized contact homology}
Changing the surgery isomorphism we can instead map into symplectic cohomology. The standard construction of symplectic cohomology of $X$ starts from a time dependent Hamiltonian with standard linear behavior at infinity of slope not in the action spectrum of the contact form. The complex is generated by Hamiltonian orbits and the differential counts solutions to the Cauchy-Riemann equations perturbed by the Hamiltonian vector field on cylinders that interpolate between orbits. Here we will use a version of symplectic cohomology that arises by first making the Hamiltonian time independent, where time independent orbits is viewed as Bott degenerate time dependent orbits, and then turning the Hamiltonian off, where perturbed curves become standard holomorphic curves with asymptotic markers remebering the Morse theory on the orbits. 

Our complex for symplectic homology is thus Reeb orbits together with a Morse complex of the underlying geometric orbit and critical points of a Morse function on $X$. The differential counts anchored holomorhic cylinders and spheres with Morse data at orbits, we call these decorated orbits. We denote the corresponding complex $SC^{\ast}(X)$, see \cite[Section 3.3]{BEE}.     

We next want to model this complex before surgery. The counterpart of Lemma \ref{l:newchords} says that Reeb orbits after surgery are in natural 1-1 correspondence with Reeb orbits before surgery and cyclic words of Reeb chords of the attaching spheres, see \cite[Section 5]{Ecurves}. In order to model this situation we use a two copy $L_0\cup L_{1}$ of the descending manifold. Here $L_{0}\cap L_{1}=\{z\}$ is one point near the middle of the handle and Reeb chords of the boundary of $\Lambda_0\cup\Lambda_1$ consists of two copies of Reeb chords of $\Lambda$ and two additional Reeb chords $x$ and $y$ for each component of $\Lambda$ corresponding to the maximum and a minimum of a Morse function making the Reeb shift generic. 

We call a Reeb chord connecting $\Lambda_0$ to $\Lambda_1$ \emph{mixed} and chords connecting $\Lambda_{j}$ to itself \emph{pure}. We then consider the two copy Chekanov-Eliashberg dg-algebra complex which is generated by words of chords in which exactly one chord is mixed and others are pure. The differential counts holomorphic curves with mixed positive puncture, one mixed negative puncture, and any number of pure negative punctures. Rather than using this complex alone we use the (very small) Lagrangian Floer cohomology complex of $L_0$ and $L_{1}$ with coefficients in the two copy complex, where the differential counts also curves with one Lagrangian intersection point and negative end with one mixed puncture. In this case, this means simply add the generator $z$ and observe that $\partial z = y$. We denote this complex $CF^{\ast}((L_{0},\Lambda_0),(L_{1},\Lambda_1))$ and write simply $\Ho CF^{\ast}(L,\Lambda)$ for the corresponding Hochschild complex that is obtained by identifying words up to cyclic permutation.

There is now a natural surgery cobordism map 
\[
\Phi^{SC}=\Phi_{\mathrm{o}}^{SC}\oplus\Phi_{\mathrm{w}}^{SC}\colon SC^{\ast}(X) \ \to \ SC^{\ast}(X_0)\oplus \Ho CF^{\ast}(L,\Lambda),
\]
where $\Phi_{\mathrm{o}}^{SC}$ is the standard cobordism map counting cylinders between Morse decorated orbits, and where $\Phi_{\mathrm{w}}^{SC}$ counts disks with positive puncture at a decorated orbit, mixed distinguished puncture at $1$ and a puncture mapping to $z$ at $-1$. We think of the right hand side as a complex with differential
\[
d = \left(\begin{matrix}
	d_{\mathrm{oo}} & d_{\mathrm{wo}}\\
	d_{\mathrm{ow}} & d_{\mathrm{ww}}
\end{matrix}\right),
\]      
where $d_{\mathrm{oo}}$ is the usual cylinder counting differential on $SC^{\ast}(X_0)$, $d_{\mathrm{wo}}=0$, $d_{\mathrm{ww}}$ is induced from the differential on $\Ho CF^{\ast}(L,\Lambda)$, and where $d_{\mathrm{ow}}$ counts disks with positive puncture at a Morse decorated orbit and distinguished negative puncture at mixed chord, similar to $\Phi_{\mathrm{w}}$. 

\begin{theorem}\label{t:SC to HCF}
The map $\Phi^{SC}$ is a chain isomorphism.
\end{theorem}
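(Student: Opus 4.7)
The plan is to mirror the proof of Theorem \ref{t:CW to CE}: combine the orbit analog of Lemma \ref{l:newchords} with an action filtration argument to reduce the isomorphism claim to a leading-order computation that splits along the two summands in the target. The orbit correspondence, recorded in the exposition and proved as in \cite[Section 5]{Ecurves}, says that for sufficiently small handles, Reeb orbits of $Y$ of action at most $\mathfrak{a}_0$ partition into perturbations of Reeb orbits of $Y_0$ lying outside the handle region and new orbits threading the handle, the latter being in natural bijection with cyclic composable words of Reeb chords of $\Lambda$ of total action at most $\mathfrak{a}_0$. After attaching the Morse data used to define $SC^{\ast}(X)$, this matches the generators of $SC^{\ast}(X_0)\oplus\Ho CF^{\ast}(L,\Lambda)$.

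First I would verify that $\Phi^{SC}$ is a chain map by the standard count of boundary points of $1$-dimensional moduli spaces of the anchored cylinders defining $\Phi_{\mathrm{o}}^{SC}$ and of the anchored disks with positive orbit puncture, distinguished mixed negative puncture at $1$, and puncture mapping to $z$ at $-1$ defining $\Phi_{\mathrm{w}}^{SC}$. The admissible breakings are SFT breakings at orbits (producing the $d_{\mathrm{oo}}$ and $d_{\mathrm{ww}}$ contributions), breakings at the mixed chord (producing $d_{\mathrm{ow}}$), and splittings at $z$, which are absorbed by the identity $\partial z=y$ built into $\Ho CF^{\ast}(L,\Lambda)$. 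The vanishing of $d_{\mathrm{wo}}$ is automatic: cylinders between decorated orbits in $X_0$ carry no Lagrangian boundary and cannot produce a mixed chord.

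Next, by an action filtration argument identical in structure to the one used after the leading-term lemma preceding Theorem \ref{t:CW to CE}, it suffices to show that for each generator $\overline{x}$ of $SC^{\ast}(X)$ of action below $\mathfrak{a}_0$, writing $x$ for the corresponding generator on the right,
\[
\Phi^{SC}(\overline{x}) \;=\; \pm\, x \;+\; (\text{terms of strictly lower action}).
\]
For orbits surviving from $Y_0$ this leading term is produced by an essentially trivial cylinder anchored by a Morse flow line, with uniqueness by a direct energy estimate. For a new orbit $\overline{\gamma}_w$ associated to a cyclic word $w$, one constructs a single transversely cut out disk through the handle by a fixed-point argument analogous to Lemma \ref{l:newchords}, and then promotes the one-letter case to longer words by an induction mirroring the two-positive-puncture step in the proof preceding Theorem \ref{t:CW to CE}: among the action-admissible ends of a two-positive-puncture moduli space, all but the desired isomorphism disk and a breaking at $z$ that is already known inductively to count $\pm 1$ are excluded, forcing the remaining count to be $\pm 1$ as well. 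Combined with the block-triangularity ensured by $d_{\mathrm{wo}}=0$, this exhibits $\Phi^{SC}$ as a triangular change of basis with $\pm 1$ diagonal, hence a chain isomorphism.

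The main obstacle will be the bookkeeping around Morse decoration on orbits and the cyclic symmetry in the target. Concretely, one must arrange the Morse functions on the geometric orbits (and the auxiliary critical-point data on $X$ and $X_0$) so that the leading-order disks are transversely cut out and pair correctly with Morse flow data at both ends; and one must verify that the output of $\Phi_{\mathrm{w}}^{SC}$ on $\overline{\gamma}_w$ naturally lies in the cyclic equivalence class $[w]\in\Ho CF^{\ast}(L,\Lambda)$, so that the freedom in placing the distinguished positive puncture around $\overline{\gamma}_w$ is exactly absorbed by the Hochschild identification. Once these compatibility checks are in place, the argument closes in the same form as for Theorem \ref{t:CW to CE}.
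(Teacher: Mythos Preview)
Your proposal is correct and follows essentially the same route as the paper: use the orbit/cyclic-word correspondence, reduce to a leading-term computation via the action filtration, and then argue exactly as for Theorem~\ref{t:CW to CE}. The paper resolves the Morse-decoration obstacle you flag by choosing the Morse function on each new orbit to have a maximum and a minimum on every constituent chord, so that the orbit decorated by the maximum on a chord $c$ is sent to the cyclic word with $c$ taken as the mixed chord, while the orbit decorated by the minimum on $c$ is sent to the word with $c$ replaced by the pure chord $c$ followed by the small mixed chord $x$; this explicit bijection on generators is precisely what makes the parallel to Theorem~\ref{t:CW to CE} go through.
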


\begin{proof}
	We adapt the Morse functions on the orbits corresponding to cyclic words so that these functions have a maximum and a minimum on each chord on the underlying orbit. It then follows that $\Phi^{CW}$ takes an orbit with maximum on a chord $c$ to the corresponding word of chords with the chord $c$ mixed, and the orbit with a minimum on $c$ to the corresponding word with $c$ replaced by $cx$, where $c$ is the pure chord and $x$ is the mixed chord at the minimum Reeb chord of the shift. The proof is then directly analogous to Theorem \ref{t:CW to CE}.  
\end{proof}

This has the following consequence.
\begin{corollary}\label{c:SC iso HCF}
The chain map $\Phi_{\mathrm{w}}^{SC}\colon SC^{\ast}(X) \to  \Ho CF^{\ast}(L,\Lambda)$ is a quasi isomorphism.
\end{corollary}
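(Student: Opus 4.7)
The plan is to interpret the target of $\Phi^{SC}$ in Theorem~\ref{t:SC to HCF} as a mapping cone and combine this with Cieliebak's vanishing of symplectic cohomology for subcritical Weinstein domains. Because $d_{\mathrm{wo}}=0$, the differential on $SC^{\ast}(X_{0})\oplus \Ho CF^{\ast}(L,\Lambda)$ is lower triangular, so this complex is exactly the mapping cone of the chain map $d_{\mathrm{ow}}\colon SC^{\ast}(X_{0})\to \Ho CF^{\ast}(L,\Lambda)$. I would package the cone structure as the short exact sequence
\[
0 \to \Ho CF^{\ast}(L,\Lambda) \xrightarrow{\iota} \bigl(SC^{\ast}(X_{0})\oplus \Ho CF^{\ast}(L,\Lambda),\,d\bigr) \xrightarrow{\pi} SC^{\ast}(X_{0}) \to 0,
\]
and pass to its associated long exact sequence in cohomology.

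The decisive input is then the subcritical vanishing theorem of Cieliebak: by its definition in Section~2, $X_{0}$ consists only of handles of index strictly less than $n$, so its symplectic cohomology $SH^{\ast}(X_{0})$ vanishes. Inserting this into the long exact sequence immediately forces the inclusion $\iota$ to be a quasi-isomorphism.

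With both $\iota$ and $\Phi^{SC}$ quasi-isomorphisms, composing $\Phi^{SC}$ with a homotopy inverse of $\iota$ yields a quasi-isomorphism $SC^{\ast}(X)\to \Ho CF^{\ast}(L,\Lambda)$. Concretely, a contracting homotopy $K$ for the acyclic complex $SC^{\ast}(X_{0})$ produces a chain homotopy $H(x)=(K\Phi_{\mathrm{o}}^{SC}(x),0)$ correcting $\Phi^{SC}$ to a chain-homotopic map whose orbit component vanishes and whose word component is the genuine chain map
\[
\tilde{\Phi}_{\mathrm{w}}^{SC} \ = \ \Phi_{\mathrm{w}}^{SC} - d_{\mathrm{ow}}\circ K\circ \Phi_{\mathrm{o}}^{SC}\colon SC^{\ast}(X)\to \Ho CF^{\ast}(L,\Lambda).
\]
This $\tilde{\Phi}_{\mathrm{w}}^{SC}$ realizes the asserted quasi-isomorphism and is the chain-level representative of $\Phi_{\mathrm{w}}^{SC}$.

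The main obstacle will be the subcritical vanishing input: one has to verify that Cieliebak's result applies cleanly in the time-independent, Morse-decorated Hamiltonian model of $SC^{\ast}$ used here, rather than only in its more standard time-dependent definition. Once this is granted, the remainder is purely homological, the only subtlety being that $\Phi_{\mathrm{w}}^{SC}$ itself is not literally a chain map (from chain-mapness of $\Phi^{SC}$ one only obtains $d_{\mathrm{ww}}\Phi_{\mathrm{w}}^{SC} + d_{\mathrm{ow}}\Phi_{\mathrm{o}}^{SC} = \Phi_{\mathrm{w}}^{SC}\, d$), so the corrected map $\tilde{\Phi}_{\mathrm{w}}^{SC}$ is what is needed to make the statement precise at the chain level.
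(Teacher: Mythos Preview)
Your proposal is correct and follows essentially the same approach as the paper: the paper's one-line proof simply invokes that $SC^{\ast}(X_{0})$ is contractible since $X_{0}$ is subcritical, which is exactly the Cieliebak vanishing input you build your argument around. You have in fact been more careful than the paper, spelling out the mapping-cone/long-exact-sequence mechanism and correctly flagging that $\Phi_{\mathrm{w}}^{SC}$ alone is only a chain map up to the correction term $d_{\mathrm{ow}}\Phi_{\mathrm{o}}^{SC}$, a subtlety the paper glosses over.
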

\begin{proof}
The complex $SC^{\ast}(X_0)$ is contractible as the symplectic homology complex of a subcritical manifold.
\end{proof}

Combining Theorem \ref{t:CW to CE} and Corollary \ref{c:SC iso HCF} we learn that the Hochschild complex $\Ho CW^{\ast}(C)$ of $CW^{\ast}(C)$ is quasi-isomorphic to the symplectic homology complex $SC^{\ast}(X)$. 

It is also possible to define the counterpart of $SC^{\ast}(X)$ without Morse data on the orbits. The corresponding complex is known as cylindrical contact homology and is isomorphic to the $S^{1}$-equivariant version of $SC^{\ast}(X)$. The standard approach to defining this $S^{1}$-equivariant version is to define a BV-operator that deforms the Hamiltonian perturbation by rotating the domain. This operation does not square to zero and one has to add (infinitely many) correction terms. However, in the Morse-Bott description of $SC^{\ast}(X)$ the BV-operator $\xi$ admits a simple description that does square to zero: if $\gamma$ is a Reeb orbit, and $\widehat{\gamma}$ and $\widecheck{\gamma}$ 
denote $\gamma$ decorated by a maximum and by a minimum, respectively, then $\xi(\widecheck{\gamma})=\widehat{\gamma}$ and $\xi(\widehat{\gamma})=0$. The corresponding operation on cyclic words is $\underline{x}w\mapsto \sum \underline{w}$, where the sum goes over all ways of choosing mixed chord in $w$. With this explicit form of the BV-operator it is straightforward to obtain the cylindrical contact homology using a model for $S^{1}$-equivariant homology on $\Ho CF^{\ast}(L,\Lambda)$ together with the $S^{1}$-action given by the BV-operator $\xi$ satifying $\xi^{2}=0$.  

\subsection{Upside down surgery}
A Lagrangian handle has the form $T^{\ast}D^{n}$. Here we think of $\partial D^{n}\times\{0\}$ as the attaching sphere $\Lambda$ that we fill by the core disk $D^{n}\times \{0\}$ which is then $L$. Note that the dynamics of the handle is the same if we view it from the other side: start from the boundary of the fiber at $0$, i.e., $\Gamma$ the boundary of the co-core disk, and attach the co-core disk $C$. We call this up-side down Lagrangian handle attachment or upside-down Legendrian surgery. This then means that if we shrink the size of the handle around $\Gamma$ we have the counterpart of Lemma \ref{l:newchords}:
\begin{lemma}
	If $Y_{0}$ is eqipped by the contact form induced from that on $Y$ by upside-down Legendrian surgery, Reeb chords of $\Lambda$ are in natural 1-1 correspondence with composable words of Reeb chords of $\Gamma$. \qed
\end{lemma}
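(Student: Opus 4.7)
The plan is to reduce the lemma directly to Lemma \ref{l:newchords} by running the original fixed-point argument in the upside-down picture, with $\Gamma$ playing the role of the attaching Legendrian and $\Lambda$ playing the role of the co-core boundary sphere.

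First I would exhibit an upside-down Weinstein structure on the handle $T^{\ast}D^{n}$: a Liouville vector field with a single index-$n$ critical point whose stable manifold is the cotangent fiber at the origin (so the fiber becomes the new core disk, with Legendrian boundary $\Gamma$) and whose unstable manifold is the original core $D^{n}\times\{0\}$ (the new co-core, with Legendrian boundary $\Lambda$). That such a Liouville vector field exists, and that gluing the handle along $\Gamma$ recovers the same total Weinstein manifold $X$ up to Weinstein homotopy, is the symplectic content of the remark that ``the dynamics of the handle is the same from the other side''. Concretely, one can homotope the standard radial Liouville vector field on $T^{\ast}D^{n}$ through gradient-like Liouville fields that interchange the roles of the base and fiber directions in a neighborhood of the origin.

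With this setup, the three geometric inputs driving the proof of Lemma \ref{l:newchords} are present verbatim. Outside the handle, the Reeb flows on $Y$ and on the new $Y_{0}$ are naturally identified. Inside the handle, after shrinking the handle size, the Reeb flow is a lift of a geodesic flow on the new core, and the fiber hemispheres of a small neighborhood of $\Gamma$ are taken to the boundary of the attaching region by a degree-one map. The fixed-point argument of Lemma \ref{l:newchords} then yields, for each action threshold $\mathfrak{a}>0$ and each handle size $\delta$ small enough, a bijection between Reeb chords of $\Lambda$ of action less than $\mathfrak{a}$ and composable words of Reeb chords of $\Gamma$ of total action less than $\mathfrak{a}$, and letting $\mathfrak{a}\to\infty$ along a sequence of ever smaller handles produces the full correspondence.

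The step that requires honest attention rather than formal transport is verifying that the contact form on $Y_{0}$ induced by the upside-down surgery really is of the type covered by Lemma \ref{l:newchords}: one must check that the Weinstein handle $T^{\ast}D^{n}$ is symmetric under the role swap $L\leftrightarrow C$ through a Liouville homotopy that does not create new bounded-action Reeb chords on the boundary. This is the main obstacle; once the symmetry and the identification of the two attaching neighborhoods are in place, the 1-1 correspondence is immediate from Lemma \ref{l:newchords}.
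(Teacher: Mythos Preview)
Your proposal is correct and follows exactly the line the paper takes: the paper simply asserts the lemma with a \qed, relying on the preceding remark that ``the dynamics of the handle is the same if we view it from the other side'' and then invoking the counterpart of Lemma~\ref{l:newchords} with the roles of $\Lambda$ and $\Gamma$ swapped. Your write-up is in fact more careful than the paper's, since you flag the one honest check (that the Weinstein handle is symmetric under $L\leftrightarrow C$ through a Liouville homotopy) that the paper leaves implicit.
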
  

By analogy with $\Phi^{CW}$, we now have a chain map of infinity co-algebras
\[
\Phi_{CW}\colon \baar CW^{\ast}(C)\to LCE^{\ast}(\Lambda).
\]
Here the left hand side is the bar complex of $CW^{\ast}(C)$, i.e., $\baar CW^{\ast}(C)$ is generated by words of Reeb chords of $\Gamma$ and critical points in $C$ with differential induced by the $\mu_{k}$-operations. This bar complex $\baar CW^{\ast}(C)$ is naturally a (infinity) co-algebra with differential $c_1$, co-product $c_2$ corresponding to splitting words in two all possible ways and all higher $c_{k}$ equal to zero. The right hand side is the linearized Chekanov-Eliashberg dg-algebra (here we assume that $CE^{\ast}(\Lambda)$ has an augmentaion) where the operations $c_{k}$ correspond to the part of the (augmented) differential with $k$ negative punctures. In direct analogy with Theorem \ref{t:CW to CE} we have the following.
\begin{theorem}\label{t: CW to LCE}
The map $\Phi_{CW}$ is a quasi-isomorphism.\qed
\end{theorem}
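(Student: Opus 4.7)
The plan is to mirror the proof of Theorem \ref{t:CW to CE}, using the upside-down version of Lemma \ref{l:newchords} stated just above in place of the original. Fix an action cutoff $\mathfrak{a}_{0}>0$ and a sufficiently small handle so that the upside-down lemma provides a bijection $c \leftrightarrow w(c)$ between Reeb chords $c$ of $\Lambda$ of action at most $\mathfrak{a}_{0}$ and composable words $w(c)$ of Reeb chords of $\Gamma$ of total action at most $\mathfrak{a}_{0}$. Together with the critical points of a Morse function on $C$, this provides the candidate linear identification between the two complexes.

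The key step is to analyze the underlying chain map (the arity-one part of the coalgebra morphism) on a word $w$ of Reeb chords of $\Gamma$, viewed as a generator of $\baar CW^{\ast}(C)$, and to show
\[
\Phi_{CW}(w) \ = \ \pm\, c(w) \ + \ E(w),
\]
where $E(w)$ is a sum of Reeb chords of $\Lambda$ of action strictly less than that of $w$. The base case of a one-letter word is handled by an explicit, transversely cut out strip in the upside-down cobordism, unique by action exactly as in the base case of Theorem \ref{t:CW to CE}. For the inductive step one considers the 1-dimensional moduli space of disks in the upside-down cobordism with two positive punctures at $\Gamma$-words $w_{0},w_{1}$ and a single negative puncture at $c(w_{0}w_{1})$; by action monotonicity and the upside-down lemma, its only boundary strata are splittings at an intersection point in $C\cap L$ (counted with sign $\pm 1$ by the induction hypothesis) and concatenations of an isomorphism disk with the disk realizing the inductive coefficient. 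Balancing these ends forces the leading coefficient to be $\pm 1$.

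Once this upper-triangular-with-unit-diagonal form is established, a standard action filtration argument, identical to the one closing the proof of Theorem \ref{t:CW to CE}, shows that $\Phi_{CW}$ is a quasi-isomorphism: each action-filtered subcomplex maps isomorphically onto its image, and passing to the limit $\mathfrak{a}_{0}\to\infty$ yields the theorem.

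The main obstacle, as in the original surgery isomorphism, is the compactness and transversality of the moduli spaces together with the identification of codimension-one boundary strata. The extra subtlety here is that the source carries a bar coalgebra differential whose splittings must be matched with the coalgebra operations $c_{k}$ on $LCE^{\ast}(\Lambda)$; this requires careful use of parallel copies of $C$, exactly as in the proof of the $A_{\infty}$-structure on $CW^{\ast}(C)$, so that no unexpected breaking occurs at pure $\Gamma$-chord punctures beyond what the bar/linearization duality already accounts for. Given the upside-down lemma and the machinery already deployed in Theorem \ref{t:CW to CE}, this bookkeeping is the only substantive item, and the desired quasi-isomorphism follows.
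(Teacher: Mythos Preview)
Your proposal is correct and matches the paper's intended approach: the paper gives no explicit argument here, marking the theorem with a bare \qed\ after the sentence ``In direct analogy with Theorem \ref{t:CW to CE}'', and your outline supplies precisely that analogy --- the upside-down Reeb-chord correspondence, the inductive construction of isomorphism disks yielding upper-triangularity with $\pm 1$ on the diagonal, and the action-filtration argument. Your added remark about handling the bar/co-algebra bookkeeping with parallel copies is a reasonable elaboration of a point the paper leaves implicit.
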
 

We consider also the cyclic version of this map. We start from the Hochschild complex $\Ho CW^{\ast}(C)$ of $CW^{\ast}(C)$ generated by cyclic words of Reeb chords one of which is distinguished. In analogy with the core disk, we think of this as a two copy complex of where the distinguished puncture is mixed and where we also have the intersection point $z=C_{0}\cap C_1$ as a generator with $\partial z=x$, where is the minimum of the Reeb shift at infinity and where the maximum $y$ of the Reeb shift plays a role analogous to $x$ for the two copy of $L$. We then add to $\Ho CW^{\ast}(C)$ the complex $SC^{\ast}(X)$. The differential counts, except for the usual curves also disks with several positive punctures and a negative puncture at the center, the location of the distinguished positive puncture at a fixed boundary location determined by the marker at orbit in the center and an anti-podal puncture mapping to $z$. We denote this complex 
\[
\Ho CW^{\ast}(C)\oplus SC^{\ast}(X)
\] 
and in analogy with Theorem \ref{t:SC to HCF} we have a natural chain map
\[
\Psi\colon \Ho CW^{\ast}(C)\oplus SC^{\ast}(X) \ \to \ SC^{\ast}(X_{0})
\]
which is a chain isomorhism. The connecting homomorphism in the long exact sequence associated to the short exact sequence  
\[
0 \ \to \ SC^{\ast}(X) \ \to \ \Ho CW^{\ast}(C)\oplus SC^{\ast}(X) \ \to \ \Ho CW^{\ast}(C) \ \to \ 0 
\]
is called the \emph{open-closed map}, see \cite{Ab,G}. We will denote it 
\[
\mathcal{OC}\colon \Ho CW^{\ast}(C)\to SC^{\ast}(X),
\]
see Figure \ref{fig:OC}.
\begin{figure}[htbp]
   \centering
   \includegraphics[width=.6\linewidth]{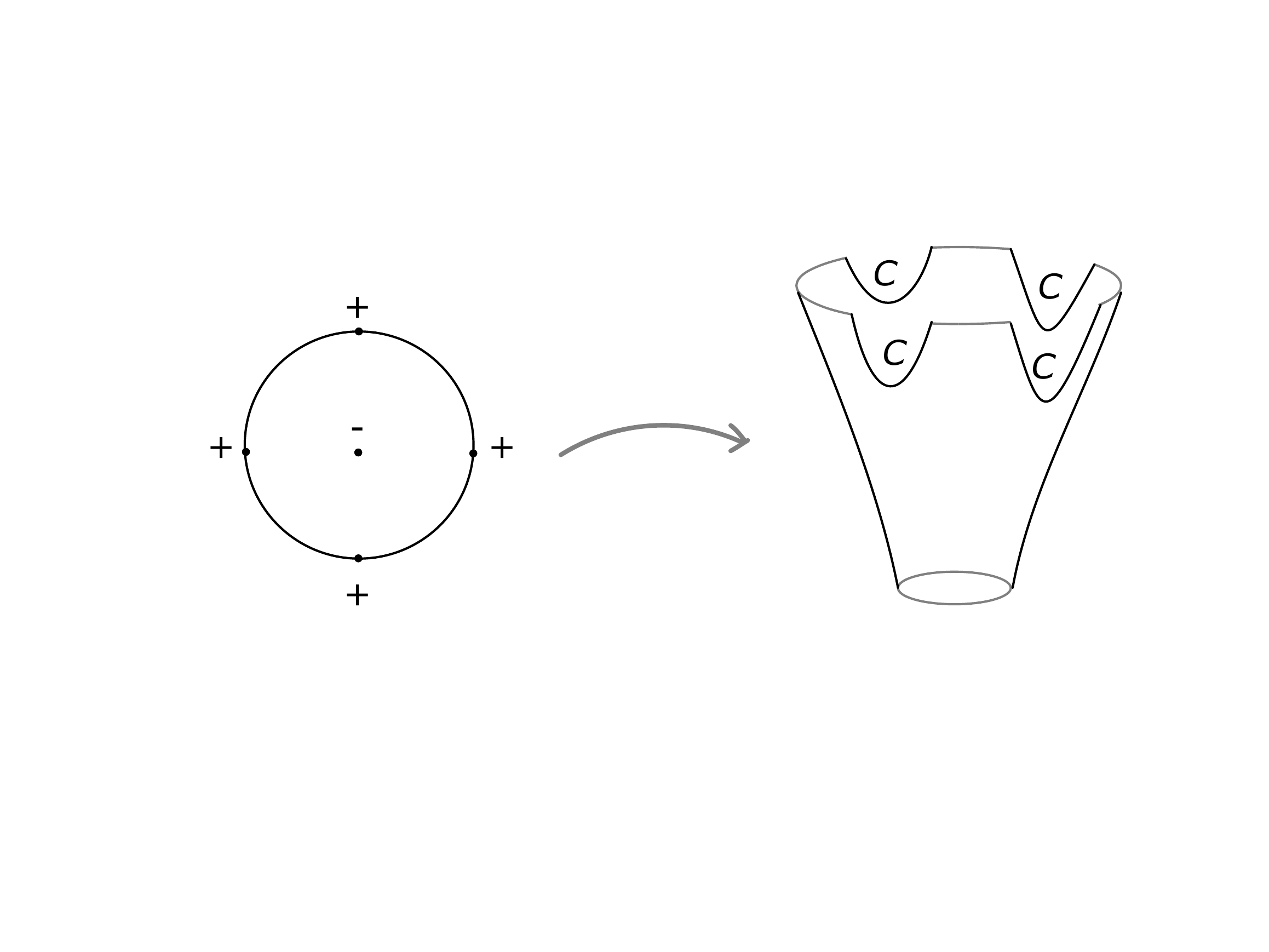} 
   \caption{Curves counted by the open-closed map.}
   \label{fig:OC}
\end{figure}

Since $SC^{\ast}(X_{0})$ is contractible we find:
\begin{corollary}
The open-closed map $\mathcal{OC}\colon \Ho CW^{\ast}(C)\to SC^{\ast}(X)$ is a quasi-isomorphism.\qed 
\end{corollary}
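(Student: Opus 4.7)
The plan is to run the standard homological algebra argument that the connecting homomorphism of a short exact sequence with acyclic middle term is a quasi-isomorphism. All the geometric content has already been packaged into the chain isomorphism $\Psi$ and into the vanishing $H^{\ast}(SC^{\ast}(X_{0}))=0$, so nothing new needs to be proved about holomorphic curves.

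Concretely, I would first apply the snake/zig-zag lemma to the short exact sequence
\[
0 \ \to \ SC^{\ast}(X) \ \to \ \Ho CW^{\ast}(C)\oplus SC^{\ast}(X) \ \to \ \Ho CW^{\ast}(C) \ \to \ 0
\]
of chain complexes to obtain a long exact sequence in cohomology whose connecting map is, by definition, $\mathcal{OC}$. Next I would invoke the chain isomorphism $\Psi\colon \Ho CW^{\ast}(C)\oplus SC^{\ast}(X) \to SC^{\ast}(X_{0})$ established just above the statement, which identifies the cohomology of the middle term with $H^{\ast}(SC^{\ast}(X_{0}))$. Finally I would use the standard fact that the symplectic cohomology of a subcritical Weinstein manifold vanishes (since $X_{0}$ is built entirely from handles of index $<n$), to conclude $H^{\ast}(SC^{\ast}(X_{0}))=0$, hence the middle term of the long exact sequence is zero.

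The long exact sequence then collapses to a sequence of isomorphisms
\[
H^{\ast}(\Ho CW^{\ast}(C)) \ \xrightarrow{\mathcal{OC}} \ H^{\ast+1}(SC^{\ast}(X)),
\]
which is exactly the claim. There is no real obstacle: the potential sign/degree subtleties in setting up the connecting homomorphism are routine, and the only non-trivial input — that $\Psi$ is a chain isomorphism — is already in hand from the preceding discussion (proved in direct analogy with Theorem \ref{t:SC to HCF}, using the correspondence between Reeb chords of $\Gamma$ and composable words of Reeb chords of $\Lambda$ provided by upside-down surgery). The contractibility of $SC^{\ast}(X_{0})$ is a classical result going back to Cieliebak and is treated in \cite{BEE}.
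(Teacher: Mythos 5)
Your argument is exactly the one the paper intends: the corollary is stated with a \qed because it follows immediately from the long exact sequence of the displayed short exact sequence, the chain isomorphism $\Psi$ identifying the middle term with $SC^{\ast}(X_{0})$, and the contractibility of $SC^{\ast}(X_{0})$ for the subcritical domain $X_{0}$ (the same vanishing already invoked for Corollary \ref{c:SC iso HCF}). So your proposal is correct and takes essentially the same route as the paper.
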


\subsection{The closed-open map and isomorphisms of Hochschild homology and cohomology}
The open-closed and closed-open maps were originally studied in the setting of wrapped Floer cohomology from the Hamiltonian view point, see \cite{G,Ab}. Here we continue instead with the Legendrian surgery perspective and use wrapped Floer cohomology without Hamiltonian, see \cite[Appendix B.1]{EL}. 

The Hochschild chain complex $\Ho CW^{\ast}(C)$ above can be thought of as generated by words of negative punctures at Reeb chords along the boundary of a formal disk (one of which is distinguished, we take it to be mixed). The Hochschild differential then attaches holomorphic disks with several positive and one negative puncture to such words in all possible ways (the negative and one of the positive punctures mixed). Similarly, we consider the Hochschild cochain complex $\Ho' CW^{\ast}(C)$ generated by chords along the boundary of a formal disk, all positive except one which is negative (as in the differential the negative and one positive puncture mixed). The differential on $\Ho' CW^{\ast}(C)$ is obtained by attaching the disks corresponding to the differential on $\Ho CW^{\ast}(C)$ with one negative and several positive punctures in all possible ways, from above and below. 

The complex $\Ho' CW^{\ast}(C)$ has a natural product which acts by gluing negative and positive punctures of one to the other. This product has a natural unit $e$ which is the sum of words of two punctured disks with the same chord at the positive and negative puncture. Furthermore, there is a curve counting map, \emph{the closed-open map}  
\[
\mathcal{CO}\colon SC^{\ast}(X)\to \Ho' CW^{\ast}(C) 
\]
which counts disks with a positive puncture with marker at the center mapping to an orbit with Morse decoration, one mixed negative boundary puncture at a fixed position, any number of positive boundary punctures with the mixed chord at the point opposite the fixed position, see Figure \ref{fig:CO}. 

\begin{figure}[htbp]
   \centering
   \includegraphics[width=.6\linewidth]{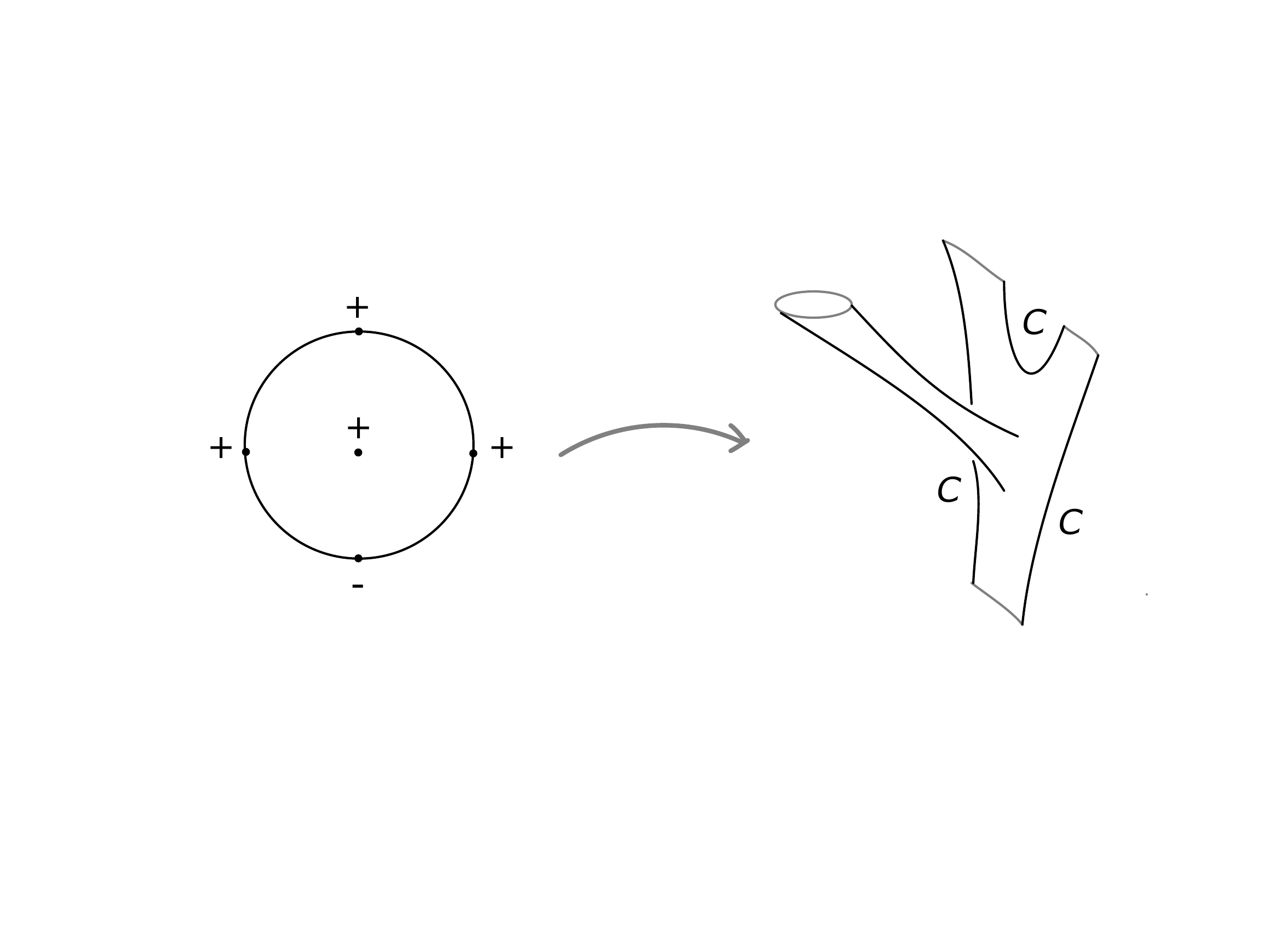} 
   \caption{Curves counted by the closed-open map}
   \label{fig:CO}
\end{figure}

\begin{lemma}
The map $\mathcal{CO}$ is a chain map that respects the product (on homology).
\end{lemma}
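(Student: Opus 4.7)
The plan is to prove both assertions by the standard SFT technique of identifying the codimension-one boundary components of compactified one-dimensional moduli spaces of holomorphic curves of the type counted by $\mathcal{CO}$, reading off the chain map identity as the algebraic shadow of this boundary, and then constructing a one-parameter interpolation whose boundary encodes the product compatibility.

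For the chain map property, I would consider a one-dimensional moduli space of disks of the type counted by $\mathcal{CO}$: one interior positive puncture asymptotic to a Morse-decorated Reeb orbit $\gamma$ with a marker, one distinguished mixed negative boundary puncture at a fixed boundary position, and any number of positive boundary punctures with their mixed chord occurring at the antipodal boundary point, together with the parallel copies of $C$ used in defining the $A_\infty$-operations on $CW^\ast(C)$. The codimension-one boundary of the SFT compactification consists of: (i) SFT breaking at the interior puncture, producing a broken cylinder above a $\mathcal{CO}$-disk, contributing $\mathcal{CO}\circ d_{SC}$; (ii) boundary strip/disk breaking at the distinguished mixed negative puncture or at collections of positive boundary punctures, producing a $\mu_k$-disk of $CW^\ast(C)$ concatenated to a $\mathcal{CO}$-disk, which assembles precisely to the Hochschild cochain differential on $\Ho' CW^\ast(C)$ applied to $\mathcal{CO}(\gamma)$; (iii) anchoring breakings at pure orbits and pure chords, already absorbed in the decorations. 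The marker at the interior orbit and the antipodal constraint at the boundary mixed punctures rule out unwanted splittings, and standard action and index arguments exclude sphere and disk bubbling off $C$. Equating the total signed count on the boundary to zero yields $d_{\Ho' CW}\circ \mathcal{CO}=\mathcal{CO}\circ d_{SC}$.

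For compatibility with products on homology, I would construct a one-parameter family of moduli spaces interpolating between $\mathcal{CO}(\alpha)\cdot \mathcal{CO}(\beta)$ and $\mathcal{CO}(\alpha\cdot\beta)$. The family consists of disks with two interior positive punctures asymptotic to Morse-decorated orbits $\alpha$ and $\beta$ and the same boundary data as for $\mathcal{CO}$, parameterized by a real modulus $r$ controlling the conformal distance between the two interior punctures. As $r\to 0$ the two interior punctures collide and neck-stretch to a pair-of-pants above a $\mathcal{CO}$-disk with interior orbit the pants product $\alpha\cdot\beta$, so this end contributes $\mathcal{CO}(\alpha\cdot\beta)$. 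As $r\to\infty$ the disk degenerates along a separating arc into two $\mathcal{CO}$-disks joined at a mixed Reeb chord, which matches the definition of the product on $\Ho' CW^\ast(C)$ and contributes $\mathcal{CO}(\alpha)\cdot \mathcal{CO}(\beta)$. The remaining codimension-one boundary of the parameterized moduli space consists of SFT and strip breakings as in the chain map case, which assemble into an expression of the form $d\circ H + H\circ d$ where $H$ is the operator defined by the interior of the $r$-family; this yields the product identity on homology.

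The main obstacle will be controlling the degenerations of the interpolating moduli: one must rule out additional bubbling at the interior, ensure that the orbit markers remain compatible with the antipodal boundary constraint at both ends of the $r$-family, and verify that the limit $r\to 0$ reproduces the pants product on $SC^\ast(X)$ rather than some twisted variant (for example a BV-descendant coming from the Morse data on the orbits). These issues should be handled by the same parallel-copy perturbation and transversality setup used in the proofs of Theorem \ref{t:CW to CE} and Theorem \ref{t:SC to HCF}, together with a neck-stretching argument adapted to two-interior-puncture configurations in the spirit of the Morse-Bott treatment of $SC^\ast(X)$.
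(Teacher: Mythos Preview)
Your proposal is correct and follows essentially the same approach as the paper. The paper's proof is extremely terse---it simply notes that the composition $\mathcal{CO}\circ\mu$ produces disks with two interior positive punctures and CO-type boundary data, and that the other splittings of this moduli space give the Hochschild product up to exact terms---which is precisely your one-parameter interpolation argument with the $r\to 0$ and $r\to\infty$ ends made explicit.
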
   
\begin{proof}
The symplectic cohomology product followed by the isomorphism gives a disk with two positive punctures. Looking at possible splittings we find the product on the Hochschild cochains up to exact terms.	
\end{proof}

Consider now the complex $\Ho CW^{\ast}(C)$. This is naturally a $\Ho'CW^{\ast}(C)$-module: if $a\in \Ho CW^{\ast}(C)$ is a cyclic word and $r\in\Ho' CW^{\ast}(C)$ then $r\cdot a$ is the sum of cyclic words obtained by attaching the positive punctures of $r$ to consecutive (negative) punctures of $a$. Precomposing the product by the map $\mathcal{CO}$ we find that $\Ho CW^{\ast}(C)$ is also an $SC^{\ast}(X)$-module. The symplectic homology complex has the pairs of pants product and is hence an $SC^{\ast}(X)$-module itself. We have the following.

\begin{lemma}
The open-closed map $\mathcal{OC}\colon \Ho CW^{\ast}(C)\to SC^{\ast}(X)$ is a map of $SC^{\ast}(X)$-modules.
\end{lemma}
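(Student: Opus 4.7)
The plan is to establish the chain-level module identity
\[
\mathcal{OC}(\mathcal{CO}(\alpha)\cdot a) \ = \ \alpha\cdot_{\mathrm{POP}}\mathcal{OC}(a)
\]
for $\alpha\in SC^{\ast}(X)$ and $a\in \Ho CW^{\ast}(C)$ by realising both sides as codimension-one strata of a single one-parameter family of moduli spaces. Concretely, I would introduce moduli spaces of anchored holomorphic disks equipped with two positive interior punctures at orbits, one negative interior puncture at an orbit, one distinguished boundary puncture mapping to $z\in C_{0}\cap C_{1}$, and a cyclic sequence of further positive boundary punctures reading off the word $a$. The relative positions of the two interior punctures supply a real modulus, and the resulting one-dimensional moduli space acquires two geometric types of codimension-one boundary.

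First, as the two positive interior punctures collide, SFT compactness produces a pair-of-pants sphere glued to a disk of exactly the type counted by $\mathcal{OC}(a)$; this stratum recovers $\alpha\cdot_{\mathrm{POP}}\mathcal{OC}(a)$. Second, as one positive interior puncture escapes to the boundary, the disk necks along a Reeb chord and degenerates into a disk counted by $\mathcal{CO}(\alpha)$ glued, in all cyclic positions, to a disk counted by $\mathcal{OC}$ applied to the resulting cyclic word. Summing over all such insertions exactly reproduces $\mathcal{OC}(\mathcal{CO}(\alpha)\cdot a)$ by the definition of the $\Ho' CW^{\ast}(C)$-module structure on $\Ho CW^{\ast}(C)$. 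Balancing this signed sum against the remaining boundary strata — $\mu_{k}$- and $\partial$-breakings, which by standard arguments assemble into the Hochschild differential and the $SC^{\ast}$-differential acting on the putative chain homotopy — yields the module identity on homology.

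The main obstacle will be the bookkeeping for the full list of SFT degenerations in this one-parameter family. I need to rule out sphere bubbling at interior orbits and unintended breakings at pure Reeb chords, both handled via the anchored set-up and an action filtration in the spirit of Lemma \ref{l:newchords} and Theorems \ref{t:CW to CE} and \ref{t:SC to HCF}. I also need to be careful that the collision of interior punctures genuinely produces a pair-of-pants sphere rather than a more exotic bubble tree; this follows from the Morse-Bott model for $SC^{\ast}(X)$ introduced earlier. The most technical step is the orientation analysis yielding the correct signs between the pair-of-pants product and the Hochschild module action, which can be carried out within the coherent-orientation framework of \cite{EL}.
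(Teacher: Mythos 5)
Your overall strategy---realising the two compositions $\mathcal{OC}(\mathcal{CO}(\alpha)\cdot a)$ and $\alpha\cdot_{\mathrm{POP}}\mathcal{OC}(a)$ as the two ends of one $1$-parameter family of moduli spaces---is the same as the paper's, but your moduli space has the wrong puncture structure, and this breaks the identification of the boundary strata. The module identity has one orbit input $\alpha$, the word $a$ as boundary input, and one orbit output, so the interpolating curves must carry exactly one positive and one negative interior puncture; in the paper both are constrained to lie on the distinguished ray from the center to the distinguished (mixed) boundary puncture, and sliding along this ray is the modulus. You instead take two positive interior punctures plus a negative one, i.e.\ an extra orbit input that corresponds to nothing in the identity. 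With that configuration, the stratum you describe as ``the two positive interior punctures collide'' does not give $\alpha\cdot_{\mathrm{POP}}\mathcal{OC}(a)$: interior punctures asymptotic to orbits do not collide, the curve breaks at a Reeb orbit, and a bubble containing both positive interior punctures attaches by its negative puncture to a main disk that still carries the negative output puncture and acquires a positive interior puncture---such a disk is not an $\mathcal{OC}$ curve, whose only interior puncture is negative. The stratum that actually produces $\alpha\cdot_{\mathrm{POP}}\mathcal{OC}(a)$ is a two-level building with the pair of pants at the \emph{bottom}, containing the positive puncture at $\alpha$ and the negative output puncture, glued along an orbit to the negative interior puncture of an $\mathcal{OC}(a)$ disk above; this only arises in the one-positive/one-negative set-up. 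Likewise, in your configuration the ``escape to the boundary'' end leaves a lower level with a leftover positive interior orbit puncture, so it is not of the form $\mathcal{OC}(\text{cyclic word})$ either. What your family would control is a relation with two $SC^{\ast}(X)$ inputs, closer to the compatibility of $\mathcal{CO}$ with the pair-of-pants product, not the module identity.

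The repair is to delete the superfluous positive interior puncture: take disks with positive boundary chord punctures reading $a$ (with the distinguished mixed puncture and the $z$-puncture as in the definition of $\mathcal{OC}$), one positive interior puncture for $\alpha$ and one negative interior puncture for the output, both on the distinguished ray, and use the position along the ray as the parameter of the reduced $1$-dimensional moduli space. One end then breaks at an orbit into an $\mathcal{OC}(a)$ disk above a pair of pants, i.e.\ first $\mathcal{OC}$ and then the product; the other end breaks along Reeb chords into a $\mathcal{CO}(\alpha)$ disk attached in all admissible positions to an $\mathcal{OC}$ disk, i.e.\ first the module action and then $\mathcal{OC}$; the remaining breakings contribute the chain-homotopy and differential terms exactly as you indicate, and your remarks on anchoring, action filtration and coherent orientations apply verbatim to this corrected family.
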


\begin{proof}
The boundary of the 1-dimensional (reduced) moduli spaces of curves with positive chord punctures at the boundary, one negative and one positive interior puncture on the distinguished ray from the center to the distinguished puncture  correspond exactly to first multiplying and then applying the isomorphism or first applying the isomorphism and then multiplying. 
\end{proof}

Together with a degeneration of moduli spaces argument this leads to the following result. 

\begin{theorem}\label{t: OC iso}
The map $\mathcal{CO}\colon SC^{\ast}(X)\to \Ho' CW^{\ast}(C)$ is a quasi-isomorphism.
\end{theorem}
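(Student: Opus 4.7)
The plan is to bootstrap this result from the already-established fact that $\mathcal{OC}$ is a quasi-isomorphism, combining the module-structure lemma just proven with a degeneration argument of the kind hinted at in the theorem statement.

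First I would establish the normalization that $\mathcal{CO}$ is unital on cohomology, i.e., $[\mathcal{CO}(1_{SC})]=[e]$. Represent $1_{SC}\in SC^{\ast}(X)$ by the minimum on the constant orbit associated to the minimum of the Morse function on $X$ and then degenerate the disk defining $\mathcal{CO}$ by letting the interior orbit puncture migrate toward the distinguished mixed boundary puncture. In the limit the interior puncture disappears and the remaining configurations are precisely the two-punctured strips generating $e$. This is a direct analogue of the identification of units used in the earlier sections.

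Next I would extract a one-sided inverse from the module-structure lemma. On cohomology that lemma reads
\[
[\mathcal{OC}]\bigl([\mathcal{CO}(r)]\cdot[a]\bigr) \ = \ [r]\cup[\mathcal{OC}(a)].
\]
Since $[\mathcal{OC}]$ is an isomorphism (Corollary to Theorem \ref{t:SC to HCF}), choose a Hochschild cycle $a_{0}$ with $[\mathcal{OC}(a_{0})]=[1_{SC}]$. Then the map $\Psi\colon r\mapsto[\mathcal{CO}(r)\cdot a_{0}]$ satisfies $[\mathcal{OC}]\circ\Psi=\operatorname{id}$ on cohomology, and, since $[\mathcal{OC}]$ is invertible, $\Psi=[\mathcal{OC}]^{-1}$ is also an isomorphism. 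In particular $[\mathcal{CO}]$ is split injective on cohomology with left inverse $[a]\mapsto[\mathcal{OC}(a\cdot a_{0})]$.

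To upgrade injectivity to a quasi-isomorphism I would introduce the Cardy-type degeneration. Consider a one-parameter family of moduli spaces of disks carrying one interior orbit puncture together with both the Hochschild cochain marked boundary structure (one negative at a fixed position and positive punctures, one mixed, around it) and an additional distinguished ray from the interior puncture to the boundary at which an input from $\Ho CW^{\ast}(C)$ is inserted. One boundary of this family factors as $[\mathcal{CO}]\circ[\mathcal{OC}](a\cdot a_{0})$ while the other reduces, via Step 1 and the standard $\mu_{2}$-unit identities in $\Ho'CW^{\ast}(C)$, to $[a]$. Combined with Step 2 this produces a two-sided inverse of $[\mathcal{CO}]$ on cohomology.

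The main obstacle is the last step: controlling the boundary strata of the Cardy-type one-parameter family. One must rule out or identify all bubbling contributions coming from further orbit or chord breakings in the Morse-Bott setup, and verify that the degeneration at the two ends of the modulus parameter really does produce the two displayed compositions up to the structure relations already known. Transversality is handled by standard perturbation techniques, and the chord-word bijection of Lemma \ref{l:newchords} keeps the combinatorics under action-filtration control, so that an inductive action argument analogous to the proof of Theorem \ref{t:CW to CE} closes the argument.
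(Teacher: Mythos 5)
Your proposal follows essentially the same route as the paper: establish $\mathcal{CO}(1)=e$, use the $SC^{\ast}(X)$-module property of $\mathcal{OC}$ together with a class $u$ (your $a_{0}$) satisfying $\mathcal{OC}(u)=1$ to get injectivity, and obtain surjectivity from the Cardy-type relation $\mathcal{CO}(\mathcal{OC}(r\cdot u))=r$ up to exact terms. The one step you flag as the main obstacle, controlling the boundary strata of the Cardy family, is precisely what the paper supplies concretely: it degenerates the two-dimensional annulus moduli $\mathcal{M}(u;e)$ coming from $\mathcal{CO}\circ\mathcal{OC}(u)=e$ into pieces $D_{\mathrm{up}}$ and $D_{\mathrm{dn}}$ determined by a nontrivial contribution to $r\cdot u$, and reglues them through a rotation Lagrangian cobordism interchanging the two nearby parallel co-cores $\partial C_{0}$ and $\partial C_{1}$, whose rigid curves are only trivial strips, before splitting at a Reeb orbit to read off the relation.
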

\begin{proof}
We use two parallel fibers $C_0$ and $C_1$ as source of the map $\mathcal{OC}$ and target of the map $\mathcal{CO}$, respectively. We will take them to lie very close together. Note that for the Legendrians at infinity of the parallel fibers, the shift from $\partial_{\infty}C_0$ to $\partial_{\infty} C_{1}$ is induced by a Morse function on the sphere that is the restriction of a linear function that is negative at the south pole, vanishes on the equator, and positive at the north pole. Note then that Reeb chords $C_0\to C_1$ and $C_1\to C_0$ correspond to Reeb chords $C\to C$ and short Reeb chords near $C_0$, and that therefore, there is a natural 1-1 correspondence between Reeb chords $C_0\to C_{1}$ and Reeb chords $C_1\to C_0$.  	
	
We remark that when we discuss moduli spaces of holomorphic curves for the Lagrangians $C_0$ and $C_{1}$ below, we need to use systems of parallel copies as described above. We use separate systems for $C_{0}$ and $C_{1}$, see \cite[Appendix B]{EL}, but will leave these systems implicit in the notation.

Let $1$ denote the unit in $SC^{\ast}(X)$. In the complex $SC^\ast(X)$, $1$ is represented by the minimum of the Morse function on $X$. Since $\mathcal{CO}$ is an isomorphism we find $u\in \Ho CF^{\ast}(C)$ such that $\mathcal{OC}(u)=1$. Take $s\in SC^{\ast}(X)$ then since $\mathcal{OC}$ is a map of $SC^{\ast}(X)$-modules with the module structure on $\Ho CF^{\ast}(C)$ induced by $\mathcal{CO}$ followed by the natural action of $\Ho' CF^{\ast}(C)$ on $\Ho CF^{\ast}(C)$, we find 
$$
\mathcal{OC} ( \mathcal{CO} (s) \cdot u )= s\cdot \mathcal{OC}(u)=s\cdot 1=s.
$$ 
It follows that $\mathcal{CO}$ is injective.

To show that $\mathcal{CO}$ is surjective, note that $\mathcal{CO}(1)$ counts curves with a positive puncture at the minimum on $X$. The only such curves correspond to flow lines starting at the minimum and ending at a fixed point (the midpoint say) of any Reeb chords. It follows that 
\[
\mathcal{CO}(1) \ = \ e \ =\sum_{\text{{\tiny Reeb chords $c$}}} c_{\mathrm{pos}}\otimes c_{\mathrm{neg}}.
\] 
 
 Consider the moduli spaces $\mathcal{M}(u;e)$ involved in the equation 
\[
\mathcal{CO}\circ\mathcal{OC}(u) = e.
\]
The elements in these moduli spaces are infinite length cylinders with positive punctures corresponding to $u$ at one boundary component and one positive and one negative puncture along the other boundary component corresponding to any chord in $e$. Gluing at the middle orbit we gain one dimension since the marker in the middle disappears. This means that the resulting moduli space has dimension two. 

Consider a cycle $r\in \Ho' CF^{\ast}(C)$ and a non-trivial contribution to $r\cdot u$. We use this contribution to degenerate the moduli spaces in $\mathcal{M}(u;e)$. We degenerate the annulus into two disks $D_{\mathrm{up}}$ with positive punctures according to the positive punctures in $r$ and the positive puncture in the $e$-term corresponding to the negative puncture of $r$, and two negative punctures, and a disk $D_{\mathrm{dn}}$ in the lower part with positive punctures at the chords in $r\cdot u$ that are not the negative chord of $r$. Under this deformation the moduli space undergoes additional splittings that will only affect the result up to exact terms, using the assumptions that $r$ and $u$ are cycles.     

We next glue $D_{\mathrm{up}}$ and $D_{\mathrm{dn}}$ via a cobordism corresponding to an isotopy that interchanges $\partial C_0$ and $\partial C_1$. The Lagrangian cobordism $T\subset \R\times\partial X$ corresponds to the Legendrian isotopy in $\partial X$ that rotates $\partial C_0$ to $\partial C_1$, i.e., and then takes the shifting function to its negative. It is easy to see that if $C_0$ and $C_1$ are sufficiently close and the rotation is sufficiently slow, all rigid holomorphic curves in the cobordism corresponds to (reparameterized) trivial Reeb chord strips of $c$ and strips (corresponding to Morse flow lines) that interchanges the small Reeb chords that follows the rotation. Let $\mathcal{M}(T)$ denote the moduli space of rigid holomorphic curves with boundary on $T$. 

Noting that the curves in $\mathcal{M}(T)$ are strips that exactly give the natural 1-1 correspondence between Reeb chords $C_0\to C_1$ and $C_1\to C_0$, we find that the result of gluing $D_{\mathrm{dn}}$ via $\mathcal{M}(T)$ to $D_{\mathrm{up}}$ are new annuli of dimension $2$ that have positive punctures according to $r\cdot u$ along one boundary component and positive and one negative puncture according to $r$. Then deforming the Lagrangians to the standard $C_0\cup C_1$ and following the domains until it splits at a Reeb orbit, we find that  
\[
\mathcal{CO}(\mathcal{OC}(r\cdot u))=r,
\]     
up to exact terms, see Figure \ref{fig:cardy}. It follows in particular that $\mathcal{CO}$ is surjective. The theorem follows. 
\end{proof}

\begin{figure}[htbp]
   \centering
   \includegraphics[width=.6\linewidth]{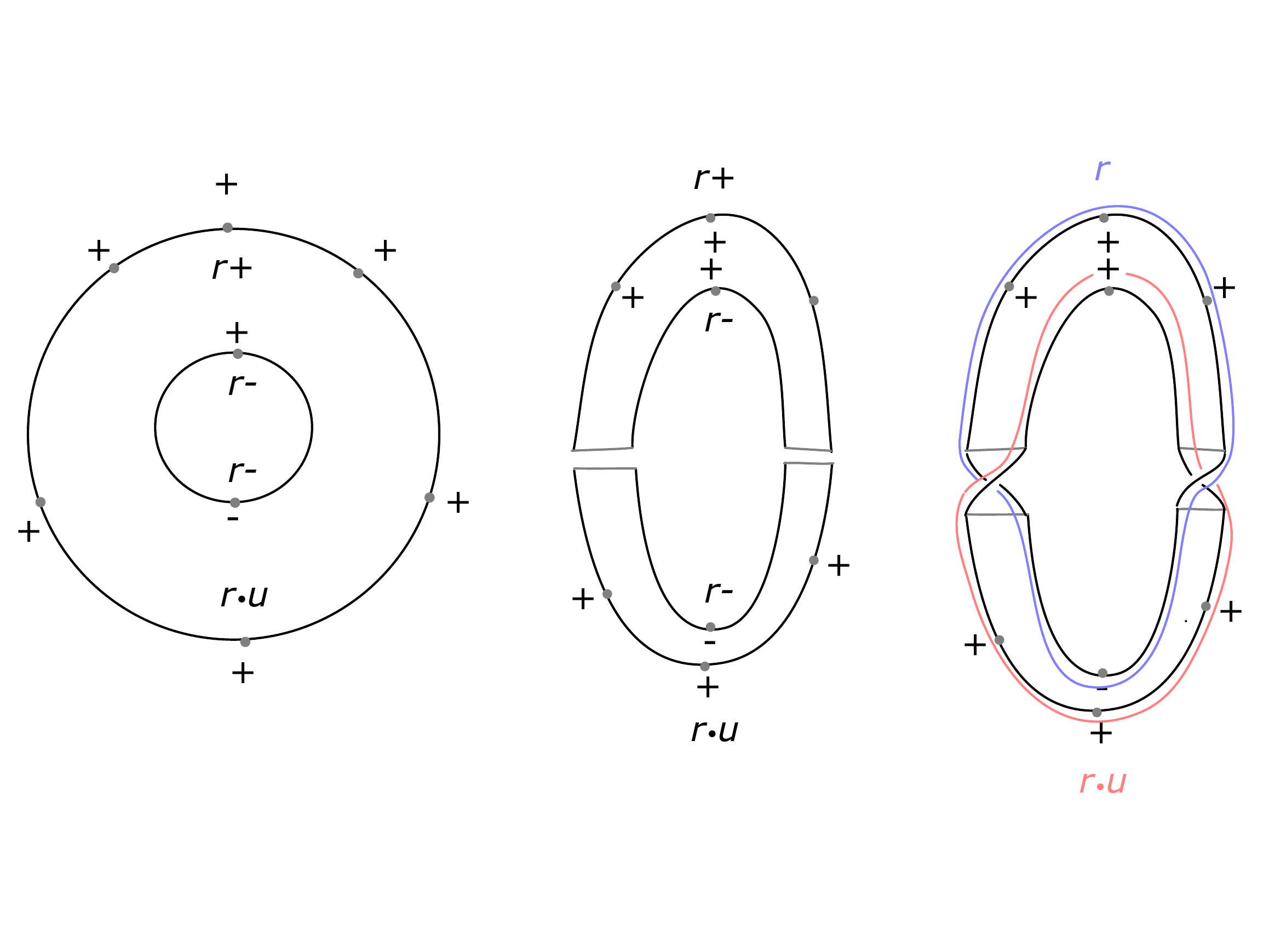} 
   \caption{Degeneration of the composition $\mathcal{CO}\circ\mathcal{OC}(u)$ according to $r\in \Ho'CW^{\ast}(C)$ and gluing by the rotation cobordism.}
   \label{fig:cardy}
\end{figure}

\subsection{The symplectic homology product and Calabi-Yau structures}
In this section we consider the product on $SC^{\ast}(X)$. The isomorphism $\Phi^{SC}\colon SC^{\ast}(X)\to \Ho CE^{\ast}(\Lambda)$ allows us to push the pair of pants product $\mu$ on $SC^{\ast}(X)$ to the Hochschild complex $\Ho CE^{\ast}(\Lambda)$. This is related to a duality Calabi-Yau structure on $CE^{\ast}(\Lambda)$. 

In order to compute the product we consider the composition 
\[
\Phi^{SC}\circ \mu\colon SC^{\ast}(X)\otimes SC^{\ast}(X)\to \Ho CE^{\ast}(\Lambda).
\] 
The curves contributing to this composition are disks with two positive punctures near the origin and negative boundary punctures and one extra puncture opposite the distinguished mixed puncture mapping to $z$. As in the proof of Theorem \ref{t: OC iso}, at the gluing we gain one dimension from a disappearing conformal constraint. We use it to control the conformal structure of the domain. Studying splittings we find the following, see \cite{BEE2}.

\begin{theorem}
The symplectic cohomology product is represented on $\Ho CE^{\ast}(\Lambda)$ by attaching disks with two positive punctures and auxiliary negative punctures to pairs of cyclic words of chords. Here the disks are of two types: disks with one Lagrangian intersection puncture at $z$ and one positive and one negative puncture at the same Reeb chord $c$ for any $c$, and two level disks: a triangle followed by a disk with two positive punctures, see \cite[Figure 1]{BEE2}.\qed   
\end{theorem}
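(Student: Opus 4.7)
The plan is to compute the composition $\Phi^{SC}\circ\mu$ directly by gluing the pair-of-pants domain for $\mu$ to the surgery disk for $\Phi^{SC}$ along the intermediate Reeb orbit, and then to track a resulting one-parameter family of moduli spaces. After concatenation one obtains a connected disk with two interior positive punctures at decorated orbits near the center, one distinguished boundary puncture at $z$ opposite the marked mixed Reeb-chord puncture, and further negative boundary punctures on a cyclic word of Reeb chords of $\Lambda$. As in the proof of Theorem~\ref{t: OC iso}, the broken-orbit matching condition contributes one extra real parameter to this moduli space, a conformal modulus of the domain, so the zero-dimensional moduli space representing $\Phi^{SC}\circ\mu$ is cut out by imposing a single real conformal constraint.

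The next step is to vary this constraint through a one-parameter family and to read off the degenerations at the endpoints of the interval. By SFT compactness together with the action filtration, the possible boundary strata are of a few shapes. Breakings at Reeb orbits produce compositions of $\Phi^{SC}$ with the Floer differential and with the symplectic-cohomology pair-of-pants product, so one end of the family recovers $\Phi^{SC}(\mu(a,b))$ and all further orbit breakings contribute chain-homotopy terms. Breakings at Reeb chords split off standard Chekanov--Eliashberg differential pieces and $A_\infty$-like corrections, again exact for cycles. The genuinely new contributions come from two special strata.

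These remaining strata are exactly the two families in the theorem. The first is the degeneration in which the $z$-puncture and the two interior punctures come together and bubble off a small disk with one Lagrangian intersection puncture at $z$ and one positive and one negative puncture at the same Reeb chord $c$; summing over $c$ gives the diagonal contribution. The second is the two-level configuration obtained when the domain stretches along a neck separating the two interior punctures from the negative boundary punctures. The upper level is a triangle, i.e.\ a disk with the two positive interior orbit punctures and one negative boundary puncture at a Reeb chord; the lower level is a surgery disk with two positive Reeb-chord punctures and further negative punctures running along a cyclic word. Gluing recovers precisely the two disk types listed in the statement, and assembles them into a chain-level operation on $\Ho CE^{\ast}(\Lambda)$.

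The main obstacle will be to verify that this list of codimension-one strata is complete and that the contributions assemble coherently. The key inputs are transversality for disks with two interior orbit punctures of the prescribed asymptotic type, action bounds that prevent multi-level degenerations with more than one triangle layer, and a careful local Deligne--Mumford analysis of the boundary of the compactified moduli space of domains with two interior marked points, one marked Lagrangian puncture at $z$, and a distinguished mixed boundary puncture. With these ingredients the one-parameter family provides the desired chain homotopy identifying the pair-of-pants product on $SC^{\ast}(X)$ with the combinatorial operation on $\Ho CE^{\ast}(\Lambda)$ described in the statement, following the pattern of \cite{BEE2}.
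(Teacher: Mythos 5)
Your overall strategy is the same as the paper's (and as \cite{BEE2}): compute the composition $\Phi^{SC}\circ\mu$ by gluing the pair of pants to the $\Phi^{SC}$-disk at the intermediate orbit, observe that the disappearing marker/conformal constraint frees one real parameter, use that parameter to control the conformal structure of the domain, and read off the splittings. So the setup is right. The problem is in the degeneration analysis, which is where the entire content of the theorem lies, and there your identification of the limit configurations does not prove the statement. The theorem asserts an operation on $\Ho CE^{\ast}(\Lambda)$, i.e.\ an operation that takes a \emph{pair of cyclic words of chords} as input; consequently the two types of disks being attached carry only boundary punctures at Reeb chords of $\Lambda$ (plus the Lagrangian intersection puncture at $z$) and no interior orbit punctures. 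In your description, however, the two interior orbit punctures end up on the ``new'' components: your first stratum has ``the $z$-puncture and the two interior punctures come together and bubble off'' a disk which you then claim has only $z$ and a $\pm c$ pair (internally inconsistent as written), and your second stratum puts the two interior orbit punctures on the ``triangle''. A configuration whose new components still carry the orbit punctures defines at best an operation with orbit inputs, not an operation on cyclic words, and in particular it cannot be attached to a pair of cyclic words as the theorem requires.

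Relatedly, you never explain how the composition factors through $\Phi^{SC}\otimes\Phi^{SC}$ on the two inputs, which is what ``the product is represented on $\Ho CE^{\ast}(\Lambda)$'' means. The correct use of the freed conformal parameter is to stretch the domain so that the two interior orbit punctures separate onto two distinct $\Phi^{SC}$-type components (each with one interior positive orbit puncture, its own $z$-type puncture, and a boundary word of negative chord punctures), which convert the two input orbits into the two input cyclic words; the residual components, which carry only chord punctures and $z$, are then exactly the disks of the theorem: the disk with a puncture at $z$ and one positive and one negative puncture at the same chord $c$, and the two-level triangle followed by a disk with two positive punctures. Until your list of codimension-one strata is corrected so that the orbit punctures are absorbed by $\Phi^{SC}$-components and the attached disks are purely Legendrian (chords and $z$ only), the degeneration argument does not yield the claimed chain-level operation on $\Ho CE^{\ast}(\Lambda)$.
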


\subsection{Partial wrapping and coefficients in chains of the based loop space}
Starting from the study of attachments of punctured handles along knot conormals in \cite{ENS}, a more systematic study of Legendrian surgery for partially wrapped Floer cohomology was undertaken in \cite{EL} where it was conjectured that the partially wrapped Floer cohomology of a linking disk at a Legendrian stop is isomorphic to the Chekanov-Elishberg dg-algebra of the Legendrian stop with coefficients in chains on its based loop space. This result was later proved \cite[Theorem 1.2]{AE} by translating everything to the surgery language. 

This translation shows that any construction in exact Floer theory admits a surgery description and gives, for example,  a rather direct and geometric proof of the cut and paste methods of \cite{GPS}, see \cite[Theorem 1.3]{AE} and \cite[Theorem 1.1]{A}. Here we will illustrate the idea by giving the definition of the surgery description of partially wrapped Floer cohomology and a sample result that can be proved with this technology.

Consider Weinstein domains $W$ of dimension $2n$ and $V$ of dimension $2n-2$. We fix a Lagrangian skeleton $L$ with a fixed handle structure for $V$ and think of $V$ as a cotangent bundle of $L$. We take a Legendrian embedding of $L$ into $\partial W$ to be a contact embedding of a neighborhood $V\times (-\epsilon,\epsilon)$ of the zero section in the contactization of $V$. Given such an embedding, consider $W$ with the fixed embedding of $V\times (-\epsilon,\epsilon) $ in $\partial W$ and also the negative half $[0,-\infty)\times\R\times V$ of the symplectization of $\R\times V$ with $V\times (-\epsilon,\epsilon)\times V$ embedded in $\{0\}\times \R\times V$. We attach a $V$-handle, $T^{\ast} I\times V$ to the union. Note that the handles of $V$ give attaching spheres for $X$. We define the Chekanov-Eliahberg dg-algebra of $L$ to be the Chekanov-Eliashberg dg-algebra of the Legendrian attaching spheres of $X$. Note that it is generated by Reeb chords of $L$ together with Reeb chords of the Legendrian attaching spheres in $V$ in the middle of the handle. 

In case $L$ is smooth, observe that the internal Reeb chords of $L$ give a dg-algebra quasi-isomorphic to the chains on the based loop space of $L$. Using this observation one can show that the dg-algebra of $L$ then is isomorphic to the dg-algebra with loop space coefficients, see \cite[Theorem 1.2]{AE}.

We give one illustration of how this construction can be used. Consider a smooth manifold $M$ and fix a Morse function $f\colon M\to\R$ with one maximum and one minimum. It is well known that the wrapped Floer cohomology of the cotangent fiber in $T^{\ast} M$ is quasi-isomorphic to chains on the based loop space of $M$, 
\[
CW^{\ast}(T_{m}M)\approx C_{\ast}(\Omega M).
\]
For the purposes of this discussion, we think of this result as not cutting $M$ at all, or to comply with what comes next, as cutting $M$ below the minimum of the Morse function. We can also cut $M$ just below the maximum. If $M_{\le a}$ is the sublevel set then the cotangent bundle $T^{\ast}M_{\le a}$, with corners rounded is a subcritical Weinstein domain with the level sphere $\Lambda_{a}=\partial M_{\le a}$ as a Legendrian submanifold. By the surgery isomorphism, Theorem \ref{t:CW to CE}, $CE^{\ast}(\Lambda_{a})$ is also isomorphic to $CW^{\ast}(T_{m} M)$. It turns out the same holds true if we cut at any regular level $b$. More precisely, we get a subcritical Weinstein domain $T^{\ast}M_{\le b}$ with a Legendrian level surface $\Lambda_{b}=f^{-1}(b)$. Consider now the Chekanov-Eliashberg dg-algebra of $\Lambda_{b}$ with coefficients in chains of the based loop space of the super-level set $M_{\ge b}$, 
\[
CE^{\ast}(\Lambda_{b};C_{\ast}(\Omega(M_{\ge b}))).
\]       
Then, in fact, 
\begin{equation}\label{eq:any reg}
CE^{\ast}(\Lambda_{b};C_{\ast}(\Omega(M_{\ge b}))) \ \approx \ CW^{\ast}(T_{m}^{\ast}M)
\end{equation}
for every regular value $b$.
The isomorphisms in \eqref{eq:any reg} illustrate the fact that in the exact case, Floer cohomology can be expressed either in terms of  topology or Reeb dynamics and further that  it is possible to interpolate between these extreme cases by having a part in topology and another in Reeb dynamics. 

The isomorphisms in this example are straightforward to derive working in Legendrian surgery presentations. We describe them here as an illustration of the great generality and flexibility of the Eliashberg's original surgery approach to holomorphic curve theories in Weinstein domains. It can handle very general objects and provides e.g., Chekanov-Eliashberg dg-algebra or Floer cohomology for Legendrians with boundary with corners with coefficients in chains on the based loop space, in terms of the usual dg-algebras of Legendrian attaching spheres in suitable Weinstein manifolds. Such singular Lagrangians and Legendrians are very useful for instance as centers of mirror symmetry charts see \cite{D-RET} for a simple example.

\end{document}